\pgfplotsset{compat=1.17}%
\numberwithin{equation}{section}
\newcounter{dummy}\numberwithin{dummy}{section}
\theoremstyle{thmstyleone}%
\newtheorem{thm}[dummy]{Theorem}
\newtheorem*{Thm}{Main Theorem}
\newtheorem{prop}[dummy]{Proposition}%
\newtheorem{lemma}[dummy]{Lemma}
\newtheorem{cor}[dummy]{Corollary}
\theoremstyle{thmstyletwo}%
\newtheorem{example}[dummy]{Example}%
\newtheorem{rmk}[dummy]{Remark}%
\theoremstyle{thmstylethree}%
\newtheorem{defin}[dummy]{Definition}%
\DeclareMathOperator{\ind}{ind}
\DeclareMathOperator{\tr}{tr}
\newcommand\smallO{
  \mathchoice
    {{\scriptstyle\mathcal{O}}}
    {{\scriptstyle\mathcal{O}}}
    {{\scriptscriptstyle\mathcal{O}}}
    {\scalebox{.7}{$\scriptscriptstyle\mathcal{O}$}}
  }
\begin{document}

\title[Article Title]{Local index theory for geometric first-order differential operators}


\author{\fnm{Alberto} \sur{Richtsfeld}}\email{alberto.richtsfeld@math.su.se}

\affil{\orgdiv{Matematiska institutionen}, \orgname{Stockholms Universitet}, \orgaddress{\city{Stockholm}, \postcode{106 91}, \country{Sweden}}}


\abstract{We introduce the concept of chiral geometric operators and use Gilkey's invariance theory to prove the local index theorem for these operators.
	In other words, we demonstrate that the supertrace of the heat kernel of a given geometric operator converges as time approaches zero and that this limit is the Chern--Weil form of the Atiyah--Singer integrand. In addition to classical Dirac-type operators that appear in geometry, chiral geometric operators include all higher Dirac operators. This includes in particular the Rarita--Schwinger operator. We also construct a new class of such operators on four-manifolds called higher signature operators.}

\keywords{Index Theory, Geometric operators, Rarita--Schwinger operator}


\pacs[MSC Classification]{58J20, 47A53, 53C27, 58J35 }

\maketitle

\section{Introduction}\label{sec1}

The local index theorem, a refined form of the Atiyah--Singer index theorem, examines the short-time behavior of the heat kernel supertrace associated to the square of a first-order differential operator. The classical theorem asserts that, on a Riemannian (spin) manifold $(M,g)$, the heat kernel supertrace of the Laplacian for the classical geometric elliptic complexes converges to a limit expressed as polynomials in the Pontryagin forms associated with the metric $g$. Patodi first proved this for the de Rham complex \cite{patodiCurvatureEigenformsLaplace1971}, while Gilkey extended the result to the signature, spin and Dolbeault complex via the development of a theory for differential geometric invariants \cite{gilkeyCurvatureEigenvaluesLaplacian1973}. Atiyah, Bott, and Patodi later simplified the exposition of this proof \cite{atiyahHeatEquationIndex1973,atiyahErrataPaperHeat1975}. Witten's \cite{wittenConstraintsSupersymmetryBreaking1982} insights from supersymmetric quantum field theory connected the index of a Dirac-type operator with the fermion number, leading Alvarez-Gaumé \cite{alvarez-gaumeSupersymmetryAtiyahSingerIndex1983} to outline a novel proof for the local index theorem. Subsequently, Getzler provided a purely analytic proof, while Bismut \cite{bismutAtiyahSingerTheorems1984} contributed a probabilistic approach. Berline and Vergne \cite{berlineComputationEquivariantIndex1985} offered a geometric perspective through their analysis of the Laplacian's heat kernel expansion on the principal spin bundle. These latter proofs have replaced Gilkey's proof in popularity, but they make heavy use of the fact that the square of a Dirac-type operator is Laplace-type, whereas Gilkey's proof merely uses this fact to simplify calculations. 

In the following, we are interested in the local index theorem for geometric operators that are not of Dirac-type, with a particular focus on the Rarita--Schwinger operator. Introduced by Rarita and Schwinger \cite{raritaTheoryParticlesHalfIntegral1941}  the Rarita--Schwinger operator is used to describe the wave equations of spin-\(\frac{3}{2}\)-particles, and plays an important role in the study of supergravity and superstring theory. In geometry, however, the Rarita--Schwinger operator is rarely studied. Semmelmann studied the spectrum of the Rarita--Schwinger operator on the sphere and complex projective space \cite{semmelmannKomplexeKontaktstrukturenUnd1995}. Bure\v{s} \cite{buresHigherSpinDirac1999} introduced it from the point of view of representation theory, showing that the Dirac and Rarita--Schwinger operators appear as the first two terms in a sequence of elliptic first-order differential operators associated with representations of the spin group, called higher Dirac operators. In particular, Bure\v{s} computes their index on closed manifolds. Bär and Bandara \cite{barBoundaryValueProblems2022} studied boundary conditions for general first-order differential operators and in particular established the Fredholm property for the APS boundary condition. Branson and Hijazi \cite{bransonBochnerWeitzenbockFormulas2002} derived a Weitzenböck formula for the Rarita--Schwinger operator, whose zero-order term has no particular sign under usual geometric hypotheses, suggesting that the vanishing of the index has no direct connection with geometry. Homma and Semmelmann \cite{hommaKernelRaritaSchwinger2019} studied the problem of counting Rarita--Schwinger fields, classifying all positive quaternion Kähler manifolds and spin symmetric spaces. The case of negative Kähler Einstein manifolds is treated by Bär and Mazzeo \cite{barManifoldsManyRarita2021}, where they find a sequence of manifolds whose number of Rarita--Schwinger fields diverges to infinity. Recently, Nguyen \cite{nguyenPin2EquivariancePropertyRarita2023} established Seiberg-Witten equations for the Rarita--Schwinger operator and found a topological condition ensuring the existence of solutions in dimension four. Haj Saeedi Sadegh and Nguyen \cite{hajsaeedisadeghThreedimensionalSeibergWitten} extend the study of the Rarita--Schwinger-Seiberg-Witten equations to the three-dimensional case and prove a compactness result for the moduli space of solutions to these equations. 

The idea of Gilkey's proof is as follows: Since the operators in question are given more or less canonically by the Riemannian metric on the manifold, the formulas for the heat coefficient traces depend locally only on the coefficients $g_{ij}$ of the metric and their derivatives.  The formulas are independent of the chosen coordinate system and are therefore $\mathrm{O}(n)$-invariants. This allows the use of Weyl's fundamental theorem of invariant theory. Using the scaling properties of the heat coefficients, one can deduce that the supertrace of the heat kernel converges to a limit which can be expressed as a polynomial in Pontryagin forms. The fact that the Pontryagin classes are a cobasis of the oriented cobordism ring allows the limit to be determined as the integrand given by the Atiyah--Singer index theorem.

We use the same strategy to obtain the local index theorem for the Rarita--Schwinger operator and higher Dirac operators. The goal of this paper is to build the most general case for which Gilkey's proof works. To this end, we introduce the notion of chiral geometric operators, a class of operators that includes the Dirac operator, the Rarita--Schwinger operator, and the higher Dirac operators. We also construct a new family of examples of chiral geometric operators, so called higher signature operators. This family of operators contains the signature operator as its simplest member. Using Atiyah, Bott and Patodi's version of Gilkey's theorem \cite{atiyahHeatEquationIndex1973}, we then reformulate the proof of the local index theorem to hold for chiral geometric operators. In doing so, we unify the approaches of Atiyah, Bott, Patodi \cite{atiyahHeatEquationIndex1973,atiyahErrataPaperHeat1975} and Gilkey \cite{gilkeyInvarianceTheoryHeat1994,gilkeyCurvatureEigenvaluesLaplacian1973}, trying to make the proof as simple as possible. The result is a concise proof of the local index theorem for chiral geometric operators. Therefore, this paper can be also interesting to readers who want to understand Gilkey's method.

We give a short summary of this article:
Let $(M^n,g)$ be an even-dimensional closed Riemannian spin manifold, the Rarita--Schwinger operator
$$Q:C^\infty(M,\Sigma^{\frac{3}{2}}M)\rightarrow C^\infty(M,\Sigma^{\frac{3}{2}}M)$$
is a self-adjoint, elliptic first-order differential operator acting on sections of the $3/2$-spinor bundle $\Sigma^{\frac{3}{2}}M$ (for a definition, see Remark \ref{rmk:higher_Dirac_operator}). The orientation of $M$ gives a splitting 
$$\Sigma^{\frac{3}{2}}M = \Sigma^{\frac{3}{2}}_+M\oplus \Sigma^{\frac{3}{2}}_-M,$$
such that when the orientation is reversed, the roles of $\Sigma^{\frac{3}{2}}_\pm M$ intertwine. The Rarita--Schwinger operator is odd with respect to this splitting, i.e.
$$ Q = \begin{pmatrix}
	0 & Q_-\\
	Q_+ & 0
\end{pmatrix}.$$
The heat semigroup $\{e^{-tQ^2}, t>0\}$ of $Q$ consists of smoothing operators acting on sections of $\Sigma^{3/2}M$ and hence their Schwartz kernels $k_t$ are smooth.
These are the requirements we need in order to make the Gilkey proof work and are met by the Rarita--Schwinger operator: The operator must be
\begin{itemize}
	\item determined by the geometry of the manifold,
	\item self-adjoint, and
	\item odd with respect to a $\mathbb{Z}_2$-grading on the bundle determined by the orientation of the manifold.
\end{itemize}
We summarize these properties in Section \ref{sec:geom_operators} in the notion of chiral geometric operators (see Definition \ref{def:chiralgeomsymbol}). Apart from the higher Dirac operators, we find a new class of chiral geometric operators. This class has the signature operator on a $4$-dimensional manifold as its simplest member, we thus name these operators higher signature operators.
By wrapping Gilkey's proof into a new formalism, we then obtain the local index theorem for chiral geometric operators:

\begin{Thm}
	Assume the notation introduced in Section \ref{sec:geom_operators} and \ref{section:localindex}.
	Let $(\sigma,\varepsilon)$ be a chiral $G_n$-geometric symbol for $n=2l$ even. Let $(M,P)$ be a Riemannian $G_n$-manifold equipped with an hermitian vector bundle $\xi$. Let $D^{\xi}$ be the induced twisted geometric operator. Then the equality
	\begin{equation*}
		\lim_{t\searrow 0}\left(\operatorname{str}\exp(-t (D^{\xi})^{2})\cdot\mathrm{dvol}_g\right) = (-1)^{l} \left(\mathrm{ch}(\nabla^{\xi}) \cdot \frac{\mathrm{ch}(V_{+})-\mathrm{ch}(V_{-})}{\chi}(\nabla^{g})\cdot \hat{A}(\nabla^{g})^{2}\right)_{n}
	\end{equation*}
	holds. See also Theorem \ref{thm:localindextheorem}.
\end{Thm}

The corresponding local index theorem for the Rarita--Schwinger operator is then just a special case of the Main Theorem, see Corollary \ref{cor:RS}. Similarly, we obtain the local index theorem for the higher Dirac and higher signature operators, see Corollaries \ref{cor:higherD} and \ref{cor:higherP}.

The proof follows the approach of Atiyah, Bott and Patodi \cite{atiyahHeatEquationIndex1973} to Gilkey's proof of the local index theorem:

For $t \searrow 0$, there is an asymptotic expansion of the heat kernel consisting of $n$-forms, i.e.
$$ \mathrm{str}(e^{-tD^2_g}(x,x)dvol_g)\sim \sum_{k=0}^\infty \omega_k(g)t^{\frac{t-n}{2}} \quad \text{for } t\searrow 0.$$
The theorem is proved by showing that for $k<n$ $\omega_k$ is zero and that for $k=n$ the form $\omega_n$ can be identified as the Chern--Weil form in question.

The first observation is that the construction of the geometric operators is in some sense a natural transformation from Riemannian metrics to elliptic first order differential operators. This implies that the coefficients $\omega_{k}$ of the asymptotic expansion of the heat kernel are natural transformations from Riemannian metrics to differential forms. If in normal coordinates, these differential forms can be expressed as a polynomial in the covariant derivatives of the curvature tensor (we call this property regularity) and are homogeneous under constant conformal rescalings $g\mapsto \lambda^{2}g$, $\lambda \in \mathbb{R}$, Gilkey's Theorem tells us that these natural transformations must be given by Pontryagin forms, see Section \ref{sec:invarianttheory}.

Thus the proof essentially consists of showing that the $\omega_k$ are have the required form in normal coordinates and are homogeneous under the aforementioned rescalings. In section \ref{sec:invarianttheory} we provide a brief overview of Gilkey's invariant theory, reformulated for our purposes. Section \ref{sec:geom_operators} introduces and discusses chiral geometric operators. We prove some properties that will be needed later in the proof of the local index theorem, and we consider some examples of chiral geometric operators.  In particular, we introduce higher signature operators (see Definition \ref{def:highersignatureoperator}) and compute their index. The regularity and homogeneity are dealt with in Section \ref{seq:hke}. The identification of the limit 
$$\lim_{t\searrow 0} \mathrm{str}(e^{-tD^2}(x,x)dvol_g)$$
is discussed in Section \ref{section:localindex}.

\section{Invariant theory}\label{sec:invarianttheory}

A category $\mathcal{C}$ is locally small, if for any two objects $c,c^{\prime}\in Obj(\mathcal{C})$, the morphisms $Mor(c,c^{\prime})$ form a set. We denote by $\mathbf{CAT}$ the category of locally small categories. The morphisms between two small categories $\mathcal{B}$, $\mathcal{C}$ are the functors $\mathcal{F}:\mathcal{B}\to \mathcal{C}$. A category is said to be discrete if its only morphisms are identities. A set $A$ defines naturally a discrete category (which we will again denote by $A$) whose objects are the elements of $A$. 

Let $\mathbf{Man}_n$ to be the category of compact, connected, smooth $n$-manifolds whose morphisms are local diffeomorphisms. Consider the functors
\begin{align*}
	\mathrm{Met}:\mathbf{Man}_n^{op}&\rightarrow \mathbf{CAT}\\
	\Omega^q:\mathbf{Man}_n^{op}&\rightarrow\mathbf{CAT},
\end{align*}
where $\mathrm{Met}$ sends a manifold $M$ to the set $\mathrm{Met}(M)$ of metrics on $M$ and $\Omega^{q}$ sends a manifold $M$ to the set $\Omega^{q}(M)$ of differential $q$-forms on $M$, both understood as locally small categories.

By an hermitian vector bundle we mean a triple $\xi=(E,h,\nabla)$, where $E\to M$ is a complex vector bundle, $h$ is a hermitian metric on $E$ and $\nabla$ is an affine connection compatible with $h$. Let 
$$
\mathrm{Vect}_{\mathbb{C}}^{k}: \mathbf{Man}_{n}^{op}\to \mathbf{CAT}
$$
be the contravariant functor mapping a manifold $M$ onto the discrete category $\mathrm{Vect}_{\mathbb{C}}^{k}(M)$ whose objects are the $k$-dimensional Hermitian vector bundles over $M$.

Let $\xi=(E,h,\nabla)$ be a hermitian vector bundle over an $n$-dimensional Riemannian manifold $(M,g)$, let $p \in M$ be an arbitrary point. An orthonormal basis $b_{1},\dots,b_{n}$ of $T_{p}M$ defines a normal coordinate system around $p$ and an orthonormal basis $f_{1},\dots,f_{k}$ of $E_{p}$ defines a synchronous frame by parallelly transporting along radial geodesics emanating from $p$, giving a local trivialization of $E$. We call such a pair of normal coordinates and synchronous frame a \textbf{normal trivialization centered at $p$}. Observe that normal trivializations are fixed up to an $\mathrm{O}(n)\times \mathrm{U}(k)$-action. With respect to such a trivialization, let $R^{i}_{jkl}$ be the components of the Riemann curvature tensor, $K^{\mu}_{\nu mn}$ be the components of the curvature of $E$ and $\Gamma^{\mu}_{i\nu}$ be the Christoffel symbols of $E$. For $\mu>0$, we define $\mu^{2}\xi$ to be the hermitian bundle $(E,\mu^{2}h,\nabla)$. 

\begin{defin}[\cite{atiyahHeatEquationIndex1973, atiyahErrataPaperHeat1975,freedAtiyahSingerIndex2021}]\label{def:invariant}
	A joint geometric invariant is a natural transformation 
  $$\omega:\mathrm{Met}\times \mathrm{Vect}_{\mathbb{C}}^{k}\rightarrow \Omega^q,$$
  such that in normal trivializations centered at a point $p$, the components of $\omega(g,\xi)(p)$ with respect to the basis $dx^{I}|_{p}=dx^{i_{1}}|_{p}\wedge\dots \wedge dx^{i_{q}}|_{p}$, $i_{1}<\dots<i_{q}$ are given by universal polynomial expressions of the covariant derivatives of the curvature tensors. We say that $\omega$ is homogeneous of weight $(k,l)$ if
  $$
  \omega(\lambda^2g,\mu^2 \xi)=\lambda^k\mu^l\cdot\omega(g,\xi),
  $$
\end{defin}

\begin{rmk}\label{rmk:regularity}
	The universality of the polynomial expressions means that the polynomial expressions are independent of the manifold and the hermitian vector bundle. The above condition can be reformulated as follows:
	We assume the existence of a universal polynomial 
	$$
	\hat{P}_{\omega}:\bigoplus_{j=0}^k(\mathbb{R}^n)^{\otimes j+4} \oplus \bigoplus_{j=0}^l\mathbb{C}^m \otimes \mathbb{C}^{m,*}\otimes(\mathbb{R}^n)^{\otimes j+2} \to \mathbb{C}\otimes\Lambda^q\mathbb{R}^n,$$ 
	such that in normal trivializations centered at a point $p$
	\begin{equation}\label{eq:regularity}
	  \omega(g,\xi)(p)=\hat{P}_{\omega}(R_{p},R_{p}^\prime,\dots,R_{p}^{(k)},K_{p},K_{p}^\prime\dots,K_{p}^{(l)}),
	\end{equation}
	where $R_{p}^{(i)}$ denotes the value of the $i$th covariant derivative of the Riemann curvature tensor at $p$ and similarly, $K^{(j)}_{p}$ denotes the value of the $j$th covariant derivative of the curvature tensor of the connection $\nabla^{\xi}$ at $p$. For an element $A$ of $O(n)\times U(m)$ we have
  \begin{align*}
  A^*\hat{P}_{\omega}(R_{p},R_{p}^\prime,\dots,R_{p}^{(k)},&K_{p},K_{p}^\prime\dots,K_{p}^{(l)}) = A^*\omega(g,\xi) \\
  &=\omega(A^*g,A^*\xi)\\
  &=\hat{P}_{\omega}(A^*R_{p},A^*R_{p}^\prime,\dots,A^{*}R_{p}^{(k)},A^*K_{p},A^*K_{p}^\prime\dots,A^*K_{p}^{(l)}).
  \end{align*}
  By averaging $\hat{P}_{\omega}$ over $O(n)\times U(m)$ we obtain a $O(n)\times U(m)$-equivariant polynomial $P_{\omega}$ which still fulfills Property \ref{eq:regularity}.
  \end{rmk}

\begin{rmk}
	The original definition given by Atiyah, Bott and Patodi in \cite{atiyahHeatEquationIndex1973} demands that in local coordinates, geometric invariants are given by polynomials in $(\det g)^{-1}$, $g_{ij}$, $\partial_\alpha g_{ij}$. As pointed out by Atiyah, Bott and Patodi in \cite{atiyahErrataPaperHeat1975}, this definition is too restrictive for our purposes. In Definition \ref{def:invariant}, we have weakened the original definition by requiring that the $q$-form can be expressed directly in terms of covariant derivatives of the curvature tensors. Also, we demand this property to hold only at the point $p$ at which the normal coordinates are centered. This shortens the overall proof of the local index theorem.
\end{rmk}

Let $G$ be a compact Lie group with Lie algebra $\mathfrak{g}$. We define the ring of (universal) characteristic classes $I(G)$ of $G$ to be the ring of polynomials on the Lie algebra $\mathfrak{g}$ of $G$ which are invariant under the adjoint representation,
\begin{equation*}
    I(G):= \{ q\text{ polynomial on }\mathfrak{g} \mid \forall g\in G:\, q(\mathrm{Ad}_g X)= q(X)  \}.
\end{equation*}
Let $q \in I(\mathrm{O}(n))$ be a universal characteristic class for the orthogonal group. For a Riemannian metric $g$ on a manifold $M$, the characteristic form $q(\nabla^{g}):= q(\Omega^{g})$, where $\Omega^{g}$ is the curvature $2$-form of $g$, is a globally defined differential form of $M$. Clearly, in local coordinates the components of the characteristic form $q(\nabla^{g})$ are given by polynomials in the Riemann curvature tensor. Since the characteristic forms behave nicely under the pullback of metrics, $q$ defines a geometric invariant
\begin{equation*}
  \omega_{q}: \mathrm{Met}\to \Omega^{*}, \quad g\mapsto q(\nabla^{g}).
\end{equation*}
Similarly, for $p \in I(\mathrm{U}(k))$, the characteristic form associated to $p$ defines a geometric invariant $\omega_{p}:\xi \mapsto p(\nabla^{\xi})$. 
\begin{defin}
  The ring of geometric invariants generated by the elements of \( I(\mathrm{O}(n)) \otimes I(\mathrm{U}(k)) \), with the assignment
  \[
  q \cdot p \mapsto \omega_{q} \wedge \omega_{p}, \quad q \in I(\mathrm{O}(n)),\, p \in I(\mathrm{U}(k)),
  \]
  is denoted by \( \mathrm{Pont} \otimes \mathrm{Chern} \). For a Riemannian metric $(M,g)$ and a hermitian vector bundle $\xi\to M$, we denote the image of $\mathrm{Pont} \otimes \mathrm{Chern}$ under evaluation at $(g,\xi)$ by $\mathrm{Pont}(g) \otimes \mathrm{Chern}(\xi) $.
\end{defin}

We can now state Gilkey's Theorem, which gives a characterization of regular, homogeneous Riemannian invariants:

\begin{thm}[Gilkey, \cite{atiyahHeatEquationIndex1973,gilkeyCurvatureEigenvaluesLaplacian1973,gilkeyInvarianceTheoryHeat1994}]\label{thm:Gilkey}
	A joint geometric invariant $\omega: \mathrm{Met}\times \mathrm{Vect}^{m}_\mathbb{C}\to \Omega^{*}$ of mixed weight $(k,l)$ vanishes identically if $k>0$ or $l\neq 0$ holds. If $k=l=0$ then $\omega$ is an element of $\mathrm{Pont}\otimes \mathrm{Chern}$. That is
	$$
	\omega=\begin{cases}
	0 & \text{if }k>0\text{ or }l\neq 0, \\
	\in \mathrm{Pont}\otimes \mathrm{Chern} & k=l=0.
	\end{cases}
	$$
\end{thm}

\section{Geometric operators}\label{sec:geom_operators}

\subsection{Geometric operators and their properties}

The term "geometric operator" is often used loosely to summarise in one word differential operators that appear naturally in geometry. For our purposes, it is useful to give a precise definition and to explore some basic properties. 

Denote by  $\mathbf{Man}_n$ the category of compact, connected, smooth $n$-manifolds whose morphisms consist of local diffeomorphisms.

We want to consider the following structure groups
$$G_n\in\{\mathrm{O}(n),\mathrm{SO}(n), \mathrm{Pin}(n), \mathrm{Spin}(n)\},$$
where $\mathrm{Pin}(n)$ is the $\mathrm{Pin}$-group in the Clifford-algebra $\mathrm{Cl}_n$ with the relation
$$
vw + wv =	-2 \langle v,w\rangle, \quad v,w \in \mathbb{R}^{n}.
$$
All of the above mentioned groups have in common that they have a natural unitary action on $\mathbb{R}^n$.

\begin{defin}[\cite{atiyahIndexEllipticOperators1968}]\label{def:groupstructure}
	Given a $n$-dimensional manifold $M$, a $G_n$-structure is a principal $G_n$-bundle $P$ over $M$, together with a vector bundle isomorphism
	$$
	\phi: P\times_{G_n} \mathbb{R}^n \xrightarrow{\cong} TM.
	$$
\end{defin}

\begin{rmk}
	Since the action of $G_{n}$ on $\mathbb{R}^{n}$ is orthogonal, a $G_n$-structure induces a Riemannian metric $g$ on the manifold by setting 
	\begin{equation*}
		g(\phi([p,v]),\phi([p,w]))= \left\langle v,w \right\rangle,
	\end{equation*}
	where $[p,v]\in P\times_{G_{n}}V$ denotes the equivalence class of $(p,v)\in P\times \mathbb{R}^{n}$ and $\left\langle \cdot,\cdot\right\rangle$ is the euclidean inner product.
	In the cases of $G_n$ being either $\mathrm{SO}(n)$ or $\mathrm{Spin}(n)$, we also obtain an induced orientation on $M$.
\end{rmk}

Given a local diffeomorphism $f:M\to N$ between two manifolds $M$ and $N$ and a $G_n$-structure $(P,\phi)$ on $N$, we can define a pullback of $(P,\phi)$ under the map $f$ as follows: First we take the pullback-bundle $f^*P$ of $P$ under $f$
$$
\begin{tikzcd}
	f^*P \arrow[r, "F"] \arrow[d] & P \arrow[d] \\
	M \arrow[r, "f"]              & N          
\end{tikzcd}
$$
to define a principal $G_n$-bundle over $M$. Since $f$ is a local diffeomorphism, its differential $df\colon TM \to TN$ defines a vector bundle map, whose fiber restrictions are invertible. The concatenation of maps
$$
f^*P \times_{G_n} \mathbb{R}^n \xrightarrow{F\times\mathrm{id}} P\times_{G_n}\mathbb{R}^n \xrightarrow{\phi}TN\xrightarrow{(df)^{-1}} TM
$$
yields the desired vector bundle isomorphism $f^*P\times_{G_n}\mathbb{R}^n\cong TM$.

\begin{defin}
	Letting $\mathbf{Man}_n^{G_n}$ be the subcategory  of $\mathbf{Man}_n$, whose objects are the manifolds admitting $G_n$-structures, we can define a functor
	$$\mathrm{Str}^{G_n}: (\mathbf{Man}_n^{G_n})^{\mathrm{op}}\rightarrow\mathbf{CAT},$$
	which sends a manifold $M$ to the discrete category $\mathrm{Str}^{G_{n}}(M)$ whose objects are the $G_n$-structures over $M$.
\end{defin}

The following table lists the subcategories $\mathbf{Man}_n^{G_n}$ for the different choices of $G_n$:
\begin{center}

\begin{tabular}[h]{l|l}
	$G_n$ & $Obj(\mathbf{Man}_n^{G_{n}})$ \\ 
	\hline
	$\mathrm{O}(n)$ &  $Obj(\mathrm{Man}_n)$ \\
	$\mathrm{SO}(n)$ & $M$ with $w_1(M)=0$ \\
	$\mathrm{Pin}(n)$ & $M$ with $w_1(M)^2+w_2(M)=0$ \\
	$\mathrm{Spin}(n)$ & $M$ with $w_1(M)=w_2(M)=0$ 
\end{tabular}
\end{center}
A $G_{n}$-structure $(P,\phi)$ on a manifold $M\in Obj(\mathbf{Man}_n^{G_{n}})$ induces a metric $g$ on $M$ and a covering map
$$
  \tilde{\phi} : P \to  \begin{cases}
  \mathrm{SO}(M,g) & \text{if } G_n = \mathrm{SO}(n), \mathrm{Spin}(n)\\
  \mathrm{O}(M,g) & \text{if } G_n = \mathrm{O}(n), \mathrm{Pin}(n),
  \end{cases}
$$
$\mathrm{SO}(M,g)$ being the oriented orthonormal frame bundle and $\mathrm{O}(M,g)$ being the orthonormal frame bundle of the Riemannian manifold $(M,g)$. The map $\tilde{\phi}$ is compatible with the $G_{n}$-action on the respective frame bundles.
The induced metric $g$ defines the Levi-Civita connection $1$-form $\omega^{LC}$ on $P$. For a $G_n$-representation $\rho: G_n\rightarrow \mathrm{End}(V)$, $\omega^{LC}$ induces a connection $\nabla^{LC}$ on the associated bundle $VM= P\times_\rho V$. A $G_n$-manifold is a tuple $(M,P)$, where $M\in Obj(\mathbf{Man}_n^{G_n})$, and $P\in \mathrm{Str}^{G_{n}}(M)$.

\begin{defin}\label{def:geomsymbol}
	A universal elliptic ($G_{n}$-)symbol is a $G_n$-equivariant map 
	$$\sigma:\mathbb{R}^n\rightarrow\mathrm{Hom}(V,W),$$
	where $V,W$ are hermitian representations of $G_n$, such that for all $\xi\in \mathbb{R}^n\setminus\{0\}$
	$$
	\sigma(\xi):V\to W
	$$
	is an isomorphism \cite{atiyahIndexEllipticOperators1968}. For any $G_n$-manifold $(M,P)$, the universal elliptic symbol $\sigma$ defines an elliptic first-order differential operator
	$$ D_\sigma:= \bar{\sigma}\circ \nabla^{LC}: C^\infty(M, VM)\rightarrow C^\infty(M,WM),$$
	where $\bar{\sigma}$ is the section of $T^\ast M \otimes \mathrm{Hom}(VM,WM)$ associated to $\sigma$.
	We call operators constructed in this way geometric first-order elliptic differential operators, or geometric operators for short.
\end{defin}

Next, we prove that the class of geometric operators is closed under taking adjoints.

\begin{lemma}\label{lem:adjointsofgeometricoperators}
	Given a universal elliptic symbol $\sigma: \mathbb{R}^{n}\to \mathrm{Hom}(V,W)$ the pointwise adjoint $\sigma^{*}$ is a again universal elliptic symbol. Given a $G_{n}$-manifold $(M,P)$ the adjoint of thee geometric operator $D_{\sigma}$ is given by $D_{-\sigma^{*}}$ and thus again a geometric operator.
\end{lemma}

\begin{proof}
	Let $\sigma:\mathbb{R}^{n}\to \mathrm{Hom}(V,W)$ be a universal elliptic symbol.
	Since $G_{n}$ acts by unitaries on $V$ and $W$, we have for $g \in G_{n}$
	\begin{align*}
		\sigma(g \cdot\xi)^{*}= (g\cdot \sigma(\xi)g^{-1})^{*} = g \sigma(\xi)^{*} g^{-1}.
	\end{align*}
	Thus, the adjoint is again a universal elliptic symbol. 

	Let $\phi \in C_{c}^{\infty}(M,VM)$, $\psi \in C_{c}^{\infty}(M,WM)$ be compactly supported sections. Define a vector field
	$X \in C_{c}^{\infty}(M;TM \otimes \mathbb{C})$ by
	\begin{equation*}
		\left\langle X,Y\right\rangle = \left\langle \bar{\sigma}(Y^{\flat})\phi,\psi\right\rangle,
	\end{equation*}
	where $\flat$ is the musical isomorphism. Let $e_{1},\dots,e_{n}$ be a local orthonormal frame. Then 
	\begin{align*}
		\operatorname{div} X &= \sum_{i=1}^{n} \left\langle \nabla_{e_{i}}X,e_{i}\right\rangle\\
		&= \sum_{i=1}^{n} \partial_{e_{i}} \left\langle X,e_{i}\right\rangle - \left\langle X, \nabla_{e_{i}}e_{i}\right\rangle\\
		&= \sum_{i=1}^{n} \partial_{e_{i}} \left\langle \bar{\sigma}(e_{i}^{\flat})\phi,\psi\right\rangle - \left\langle \bar{\sigma}((\nabla_{e_{i}}e_{i})^{\flat})\phi,\psi\right\rangle\\
		&= \sum_{i=1}^{n} \left\langle \nabla_{e_{i}}(\bar{\sigma}(e_{i}^{\flat})\phi),\psi\right\rangle + \left\langle \bar{\sigma}(e_{i}^{\flat})\phi, \nabla_{e_{i}} \psi\right\rangle - \left\langle \bar{\sigma}((\nabla_{e_{i}}e_{i})^{\flat})\phi,\psi\right\rangle\\
		&= \sum_{i=1}^{n}\left\langle \bar{\sigma}(e_{i}^{\flat})\nabla_{e_{i}}\phi,\psi\right\rangle + \left\langle \phi, \bar{\sigma}(e_{i}^{\flat})^{*}\nabla_{e_{i}}\psi\right\rangle\\
		&= \left\langle D_{\sigma}\phi,\psi\right\rangle - \left\langle \phi, D_{-\sigma ^{*}}\psi\right\rangle,
	\end{align*}
	where we have used that $\bar{\sigma}$ is a parallel section of $\mathrm{Hom}(T^{*}M\otimes VM,WM)$.
	The divergence theorem implies $D_{\sigma}^{*}= D_{-\sigma^{*}}$.
\end{proof}

We also want to consider twisted chiral geometric operators. Let $\xi=(E,h,\nabla^{\xi})$ be a hermitian vector bundle and $D$ a first order differential operator.
Then the twisted operator $D^{\xi}$ is constructed as follows:

Let $(b_{1},\dots,b_{n})$ be a local frame of $M$, then $D$ is given locally by
$$
D = \sum \bar{\sigma}(b^{i})\nabla^{g}_{b_{i}},
$$
where $\bar{\sigma}$ is the section of $\mathrm{Hom}(VM,WM)\otimes T^{*}M$ induced by $\sigma$ and $\nabla^{g}$ is the Levi-Civita connection on $VM$, $(b^{1},\dots,b^{n})$ is the dual frame to $(b_{1},\dots,b_{n})$. Then $D^{\xi}$ is defined by
$$
D^{\xi}= \sum (\bar{\sigma}(b^{i})\otimes \mathrm{id}_{E})\nabla_{b_{i}}^{g\otimes \xi}
$$
where $\nabla^{g\otimes \xi}$ is the tensor product connection on $VM\otimes E$, defined by
$$
\nabla^{g\otimes \xi}(s\otimes f)= \nabla^{g} s \otimes f + s \otimes \nabla^{\xi}f, \quad s \in C^{\infty}(M;VM), \, f\in C^{\infty}(M;E).
$$

For later purposes, we want to compute the total symbol of a twisted $G_n$-geometric operator in coordinates, induced from a universal elliptic symbol
\begin{equation*}
	\sigma : \mathbb{R}^n\rightarrow\mathrm{Hom}(V,W).
\end{equation*}
Let $(M,P)$ be a $G_{n}$-manifold, equipped with a hermitian vector bundle $\xi=(E,h,\nabla)$. Fix a point $p \in M$ and let $(x^1,...,x^n)$ be normal coordinates at $p$. In the case that $G_n\in\{\mathrm{SO}(n),\mathrm{Spin}(n)\}$ assume further that the coordinates are oriented. The coordinate vector fields $(\partial_1,...,\partial_n)$ define a local section of $\mathrm{GL}(M)$. Let $(e_{1},\dots,e_{n})$ be the synchronous frame obtained from $(\partial_{1},\dots,\partial_{n})$, that is, $e_{i}$ is obtained by parallel transport of $\partial_{i}|_{p}$ along radial geodesics. 
The base change $A:U \to \mathrm{GL}(n)$, defined by
\begin{equation}\label{eq:framechange}
	(e_1,...,e_n)(x)=(\partial_1,...,\partial_n)(x)A(x),
\end{equation}
is a smooth map, where $U$ is the image of the coordinate chart, with $A(0)=\mathrm{Id}$. For $G_n \in \{\mathrm{SO}(n) ,\mathrm{O}(n)\}$, $(e_1,...,e_n)$ defines a local section of $P$,
for $G_n \in \{\mathrm{Pin}(n), \mathrm{Spin}(n)\}$, we can lift $(e_1,...,e_n)$ to a local section $s$ of $P$. 
These define trivialisations of the bundles $VM$ and $WM$. With respect to these trivialisations and coordinates the geometric operator $D$ obtained from the universal elliptic symbol $\sigma$ has the form 
\begin{align*}
	D &= \sum_j \bar{\sigma}(e^j)\nabla_{e_{j}} = [s, \sum_j \sigma^j \left(\partial_{j} + \rho_\ast\left(\omega^{LC}\left(ds(e_j)\right)\right)\right)],
\end{align*}
where $\sigma^{j}= \sigma(\mathbf{e}_{j}^{*})$.
We have
\begin{equation*}
	\omega^{LC}\left(ds(e_j)\right)=\begin{cases}
		\frac{1}{4} \sum\limits_{\nu\neq\mu} \tilde{\Gamma}_{j\mu}^\nu \mathbf{e}_\mu\mathbf{e}_\nu\in\mathfrak{spin}(n) & \text{for }G_n\in \{\mathrm{Spin}(n),\mathrm{Pin}(n)\},\\
		\frac{1}{2}\sum\limits_{\nu\neq\mu}  \tilde{\Gamma}_{j\mu}^\nu\mathbf{E}_{\mu\nu} \in \mathfrak{so}(n) & \text{for }G_n\in \{\mathrm{SO}(n),\mathrm{O}(n)\},
	\end{cases}
\end{equation*}
where \(\tilde{\Gamma}_{j\mu}^\nu\) are the Christoffel symbols with respect to \((e_1,...,e_n)\) and 
\begin{equation*}
 	\mathbf{E}_{\mu\nu}= \begin{blockarray}{cccccc}
 		 & \substack{\mu\text{th}\\\text{col.}} &   & \substack{\nu\text{th}\\\text{col.}} & \\
 		\begin{block}{(ccccc)c}
 			 & \vdots&    &  \vdots&  &  \\ 
 			 \dots&  &\dots    & -1 & \dots &\substack{ \mu\text{th}\\ \text{row}} \\
 			 & \vdots &    &\vdots  &  \\
 			 \dots& 1 &\dots  &  & \dots& \substack{ \nu\text{th}\\ \text{row}}  \\
 			 & \vdots &  &\vdots  &  &  \\
 		\end{block}
 	\end{blockarray}
\end{equation*}
for $\mu <\nu$. In the case $\mu >\nu$, set $\mathbf{E}_{\mu\nu}= -\mathbf{E}_{\nu\mu}$. 
Using the Einstein summation convention, the Christoffel symbols are given by
 \begin{align*}
	 \tilde{\Gamma}_{j\mu}^\nu &= e^\nu(\nabla_{e_j}e_\mu)\\
	 &= (A^{-1})^\nu_\lambda dx^\lambda\left(A^k_j\nabla_{\partial_k}(A^l_\mu \partial_l)\right)\\
	 &=  A^k_j(A^{-1})^\nu_\lambda  dx^\lambda\left((\partial_k A^l_\mu )\partial_l +A^l_\mu\nabla_{\partial_k} \partial_l\right)\\
	 &= A^k_j  (A^{-1})^\nu_\lambda \left(\partial_k A^\lambda_\mu  +A^l_\mu\Gamma^\lambda_{kl}\right),
 \end{align*}
 where $\Gamma^\lambda_{kl}$ are the Christoffel symbols with respect to the coordinates $(x^1,\dots,x^n)$. Thus, in normal coordinates, $D$ is given by
 \begin{align*}
   D= A_{j}^i\sigma^j\partial_i+ A^k_j (A^{-1})^s_a \left(\partial_k A^{a}_r +A^l_r\Gamma^a_{kl}\right)\cdot \rho^{r}_{s}\sigma^j.
 \end{align*}

\begin{lemma}\label{lem:opincoord}
	Let $(M,P)$ be a $G_{n}$-manifold, equipped with a hermitian vector bundle $\xi=(E,h,\nabla)$ and let $\sigma$ be a universal elliptic symbol.
	In normal trivializations the twisted geometric operator associated to the symbol $\sigma$ is given by 
	\begin{align}\label{eq:opincord}
		D^{\xi}= A_{j}^i(\sigma^j\otimes \mathrm{id})\partial_i+  A^k_j (A^{-1})^s_a \left(\partial_k A^{a}_r +A^l_r\Gamma^a_{kl}\right)\cdot ((\rho^{r}_{s}\sigma^j)\otimes \mathrm{id}) + A^{i}_{j} (\Gamma^{\xi})^{\mu}_{i\nu} \cdot \sigma^{j}\otimes \mathcal{E}^{\nu}_{\mu},
	\end{align}
	where
	$$
		\rho^{\mu}_{\nu}= \begin{cases} 0 & \mu=\nu \\
		\frac{1}{4}\rho_\ast(\mathbf{e}_\mu\mathbf{e}_\nu) &\mu \neq \nu, \,G_n\in \{\mathrm{Spin}(n),\mathrm{Pin}(n)\},\\
		\frac{1}{2}\rho_\ast(\mathbf{E}_{\mu \nu}) &\mu \neq \nu, \,G_n\in \{\mathrm{SO}(n),\mathrm{O}(n)\},
	\end{cases}
	$$
	and $\Gamma^{\xi}$ are the Christoffel symbols of $\nabla^{\xi}$ with respect to the chosen normal trivialisation, $\mathcal{E}^{\nu}_{\mu} \in \mathrm{Mat}(m,\mathbb{C})$ is the matrix with entry $1$ in the $\nu$th column and $\mu$th row and zero elsewhere and $\mathrm{id}\in \mathrm{Mat}(m,\mathbb{C})$ is the identity matrix.
\end{lemma}

\begin{proof}
	The untwisted case has been discussed in the paragraph above. For the twisted case, let
	$$
	  D = \sum_{j} \sigma^{j}(\partial_{j}+ \Gamma_{j})
	$$
	be an operator on $\mathbb{R}^{n}$ acting on sections of a trivial complex bundle $\mathbb{R}^{n}\times\mathbb{C}^{m}\to \mathbb{R}^{n}$. Here, the $\Gamma_{j}$ are $m \times m$-matrices, whose entries correspond to Christoffel symbols of the connection induced bz the $\Gamma_{j}$. Let furthermore $\nabla^{\xi}$ be a connection with Christoffel symbols $((\Gamma^{\xi})^{\mu}_{i\nu})$ on a second vector bundle $\mathbb{R}^{n}\times \mathbb{C}^{k}\to \mathbb{R}^{n}$. The Christoffel symbols of the twisted connection $\tilde{\nabla}$ is then given by
	$$
	  \tilde{\Gamma}_{j} = \Gamma_{j}\otimes \mathrm{id}_{\mathbb{C}^{k}} + \mathrm{id}_{\mathbb{C}^{m}}\otimes (\Gamma^{\xi})^{\mu}_{i\nu} \cdot \mathcal{E}^{\nu}_{\mu}.
	$$
	Since the twisted operator is given by
	\begin{align*}
		D^{\xi} = \sum_{j} (\sigma^{j}\otimes \mathrm{id}_{\mathbb{C}^{k}}) (\partial_{j} + \tilde{\Gamma}_{j}),
	\end{align*}
	a short calculation shows Equation \ref{eq:opincord}.
\end{proof}
Since $(e_{1},\dots,e_{n})$ is orthonormal, the metric tensor $g_{ij}=g(\partial_{i},\partial_{j})$ with respect to the normal coordinates is given by
\begin{gather*}
  g_{ij} = \sum_{k} (A^{-1})^{k}_{i}(A^{-1})^{k}_{j}, \quad g^{ij}= \sum_{k} A^{i}_{k}A^{j}_{k}.
\end{gather*}
In Lemma \ref{lem:opincoord}, we have seen that in normal coordinates, the coefficients of a geometric operator are given by polynomials in the Christoffel symbols $\Gamma$ of the Levi-Civita connection, in the coefficients of $A$, $A^{-1}$ and their derivatives. Atiyah, Bott and Patodi \cite{atiyahHeatEquationIndex1973} gave explicit formulas for the Taylor expansions of $A$ and the Christoffel symbols $\Gamma^{\xi}$ of a hermitian vector bundle in normal trivializations, which we will recall now. Assume therefore, that $M$ is also equipped with a hermitian vector bundle $\xi$ and fix a normal trivialization.

For a function $f: B_{r}(0)\to \mathbb{C}$ defined on a ball of radius $r>0$ in $\mathbb{R}^{n}$ let
$$
  T_{0}(f) = \sum_{\alpha} \frac{1}{\alpha!} \frac{ \partial^{\alpha} f }{\partial x^{\alpha} }(0) x^{\alpha} \in \mathbb{C }[[x]]
$$
be the Taylor expansion at $0$. The homogeneous part of degree $k$ of $T_{0}(f)$ is denoted by $T_{0}(f)[k]$. 

\begin{prop}[\cite{atiyahHeatEquationIndex1973}]\label{prop:taylor1}
  The Taylor expansion of $A^{-1}$ is given by
  \begin{align*}
    T_{0}((A^{-1})^{i}_{j})[0] &= \delta_{ij}\\
    T_{0}((A^{-1})^{i}_{j})[1] &= 0 \\
    (n^2+n)T_{0}((A^{-1})^{i}_{j})[n]&= -2\sum_{k,l}x^kx^lT_{0}(R^{i}_{klj})[n-2], \quad n\geq 2.
  \end{align*}
  In particular, the derivatives of $A$, $A^{-1}$ and the metric tensor $g_{ij}$, $g^{ij}$ at $0$ can be expressed as polynomials of the derivatives of the Riemann curvature tensor $R^{i}_{jkl}$.
\end{prop}

\begin{rmk}
  The statements for $g_{ij}$, $g^{ij}$ follow since
  \begin{align*}
    g_{ij}= \sum_{k}(A^{-1})^{k}_{i}(A^{-1})^{k}_j,\quad g^{ij}= \sum_{k}A^{i}_kA^{j}_{k}.
  \end{align*}
  Since $R^{i}_{jkl} = g^{ij}R_{ijkl}$, we see that the derivatives of $A$, $A^{-1}$, $g_{ij}$, $g^{ij}$ at $0$ can be expressed by polynomials in the derivatives of $R_{ijkl}$. 
\end{rmk}

We have a similar Taylor expansion for the Christoffel symbols:

\begin{prop}[\cite{atiyahHeatEquationIndex1973}]\label{prop:taylor2}
	Let $\xi$ be a Hermitian vector bundle.
  	The Taylor expansion of the Christoffel symbols $\Gamma^{\xi}$ of $\xi$ with respect to any trivialization is given by
  	\begin{align*}
    	T_{0}((\Gamma^{\xi})^{\mu}_{k\nu})[0] &= 0\\
    	(n+1)T_{0}((\Gamma^{\xi})^{\mu}_{k\nu})[n] &= 2 \sum_{l}x^{l}T_{0}(K^{\mu}_{\nu lk})[n-1], \quad n\geq 1,
  	\end{align*}
	where $K$ is the curvature tensor of $\xi$.
  	In other words, the derivatives of $\Gamma^{\xi}$ at $0$ are given by linear expressions in the derivatives of $K^{\mu}_{\nu kl}$. In particular $\Gamma^{\xi}(0)=0$.
\end{prop}

The derivatives of the curvature tensors $R$, $K$ can be expressed as polynomials in the covariant derivatives of the curvature, the Christoffel symbols $\Gamma$, $\Gamma^{\xi}$ and their derivatives. 
We can apply Proposition \ref{prop:taylor2} to both $\xi$ and the tangent bundle, to recursively deduce that the derivatives of the curvature tensors at $0$ are polynomials in the covariant derivatives of the curvature tensors.

\begin{cor}\label{cor:taylor}
  The derivatives of $A$, $A^{-1}$, $g_{ij}$, $g^{ij}$, $\Gamma$ at $0$ can be expressed as polynomials of the covariant derivatives of the Riemann curvature tensor $R_{ijkl}$. The derivatives of $\Gamma^{\xi}$ at $0$ can be expressed as polynomials in the covariant derivatives of the curvature tensor $R_{ijkl}$, $K^{\mu}_{\nu kl}$ of $g$, $\xi$ respectively. 
\end{cor}

\begin{rmk}
  The polynomial expressions themselves are universal in the sense that they do not depend on the chosen normal trivializations.
\end{rmk}

\subsection{Generalized gradients and examples of geometric operators}

A similar concept to geometric operators is the one of generalized gradients introduced by Stein and Weiss \cite{steinGeneralizationCauchyRiemannEquations1968}. 

The finite-dimensional complex irreducible representations of a compact Lie group are parametrized by their dominant weights. In the case of $\mathrm{Spin}(n)$ these are given by tuples $\lambda\in \mathbb{Z}^m \cup (\frac{1}{2}+\mathbb{Z})^m$ whose coordinates fulfill the following condition:
\begin{equation*}
	\begin{cases}
		\lambda_1\geq \lambda_2\geq ...\geq \lambda_{m-1}\geq |\lambda_m|&\text{, if }n=2m,\\
		\lambda_1\geq \lambda_2\geq ...\geq \lambda_{m}\geq 0 & \text{, if }n=2m+1.
	\end{cases}
\end{equation*}
The irreducible representations of $\mathrm{SO}(n)$ correspond to those representations of $\mathrm{Spin}(n)$ which fulfill $\lambda \in \mathbb{Z}^m$. The irreducible representation with dominant weight $\lambda$ will be denoted by $V_\lambda$. The (complexification of the) $\mathrm{SO}(n)$-representation $\mathbb{R}^n$ has dominant weight $\tau=(1,0,...,0)$. Abusing notation, we will write $\mathbb{R}^{n}$ instead of $V_{\tau}$.

Let $\{ \epsilon_{1},\dots,\epsilon_{m} \}$ be the standard basis of $\mathbb{Z}^{m}$.
Consider an irreducible representation $V_\lambda$ of $G_n \in\{\mathrm{SO}(n), \mathrm{Spin}(n)\}$ with dominant weight $\lambda$. Fegan's selection rule \cite[Theorem 3.4]{feganConformallyInvariantFirst1976} implies that every summand in the decomposition of $\mathbb{R}^{n}\otimes V_\lambda$ occurs only with multiplicity one. Thus, the orthogonal projection
$$
\pi_{\lambda\mu}\colon \mathbb{R}^{n}\otimes V_\lambda \to V_\mu.
$$
is well-defined.
Going over to associated vector bundles of Riemannian manifolds with given geometric structures, the projections define a certain class of geometric operators:

\begin{defin}[\cite{bransonSteinWeissOperators1997, pilcaNewProofBransons2011}]
	Let $G_n \in\{\mathrm{SO}(n), \mathrm{Spin}(n)\}$,  $V_\lambda$ be an irreducible $G_{n}$-representation and let $(M,P)$ be a $G_n$-manifold. The covariant derivative $\nabla$ takes sections of $VM$ to sections of $T^*M \otimes VM$. 
	For each $V_\mu$ appearing in the decomposition of $\mathbb{R}^{n}\otimes V_\lambda$, there is a \textbf{generalized gradient} $D_{\lambda,\mu}$ defined by the composition
	$$
	\Gamma(V_\lambda M)\xrightarrow{\nabla}\Gamma(T^*M\otimes V_\lambda M)\xrightarrow{\pi_{\lambda\mu}}\Gamma(V_{\mu}M),
	$$	
	where $\pi_{\lambda\mu}$ is the orthogonal projection $\pi_{\lambda\mu}\colon T^*M\otimes V_\lambda M\to V_\mu M$.
\end{defin}

\begin{rmk}
	The difference between geometric operators and generalized gradients is that for a geometric operator we have not assumed that the representations on which the symbol is acting are irreducible and it was not asked to be a projection. Conversely, not every generalized gradient is elliptic. However, it follows easily from Schur's Lemma that every geometric operator is a linear combination of generalized gradients. 
\end{rmk}

We want to know in which cases generalized gradients or sums of generalized gradients define geometric operators, i.e. for which subsets
$$
I\subseteq \{\epsilon \mid V_{\lambda + \epsilon}\text{ occurs in }\mathbb{R}^{n}\otimes V_\lambda\}
$$
the operators
\begin{equation*}
	D_{\lambda,I} = \sum_{\epsilon\in I} D_{\lambda, \lambda+\epsilon},
\end{equation*}
are elliptic. Branson \cite{bransonSteinWeissOperators1997} studied a similar problem, investigating for which sets $I$ the operator $G_{\lambda,I}= D_{\lambda,I}^*D_{\lambda,I}$ is elliptic. This study led to the following classification result, which was also proven by Pilca \cite{pilcaNewProofBransons2011} using different methods. The operator $G_{\lambda,I}$ is said to be minimal elliptic, if $I$ contains no proper subset $J$ such that $G_{\lambda,J}$ is elliptic.

\begin{thm}[\cite{bransonSteinWeissOperators1997, pilcaNewProofBransons2011}]\label{thm:bransonclassification}
	Let $(M,g)$ be an $n$-dimensional Riemannian (spin) manifold and $\lambda$ be the dominant weight of an irreducible $\mathrm{SO}(n)$- (or $\mathrm{Spin}(n)$-) representation. Then $G_{\lambda,I}$ is minimal elliptic iff $I$ is one of the following sets:
	\begin{enumerate}
		\item if $n=2m+1$:
		\begin{enumerate}
			\item $\{\epsilon_1\}$,
			\item $\{0\}$, if $\lambda \in \left(\frac{1}{2}+\mathbb{Z}\right)^m$,
			\item $\{-\epsilon_i, \epsilon_{i+1}\}$, for $i\in \{1,...,m-1\}$,
			\item $\{-\epsilon_m,0\}$, if $\lambda\in \mathbb{Z}^m$
		\end{enumerate}
		\item if $n=2m$:
		\begin{enumerate}
			\item $\{\epsilon_1\}$
			\item $\{-\epsilon_m\}$ if $\lambda_m>0$
			\item $\{\epsilon_m\}$ if $\lambda_m<0$
			\item $\{- \epsilon_i, \epsilon_{i+1}\}$ for $i\in \{1,...,m-2\}$
			\item $\{-\epsilon_{m-1},\epsilon_m\}$, if $\lambda_m\geq 0$
			\item $\{-\epsilon_{m-1},-\epsilon_m\}$, if $\lambda_m\leq 0$
		\end{enumerate}
	\end{enumerate}
\end{thm}

Observe that the ellipticity of $G_{\lambda,I}$ only depends on $I$. Furthermore, $G_{\lambda,I}$ is elliptic
if and only if $D_{\lambda,I}$ is overdetermined elliptic, i.e. if for each $\xi\in T^*M\setminus \{0\}$ the principal symbol
$$
\sigma_{D_{\lambda,I}}(\xi)= \sum_{\epsilon\in I} \pi_{\lambda, \lambda+\epsilon}(\xi\otimes\cdot)
$$
is injective.

Suppose a generalized gradient $D_{\lambda,I}:\Gamma(V_{\lambda }M)\to \bigoplus_{\epsilon \in I}\Gamma(V_{\lambda+\epsilon}M)$ is elliptic. Let $
  \pi_{\lambda,I} = \oplus_{\epsilon \in I}\pi_{\lambda,\lambda+\epsilon}
$ be the corresponding projection. Then the adjoint
\begin{align*}
	\pi_{\lambda,I}^{*}: \mathbb{R}^{n}\otimes \bigoplus_{\epsilon \in I}V_{\lambda+\epsilon}&\to V_{\lambda}\\
	\xi \otimes w &\mapsto \pi_{\lambda,I}(\xi)^{*}w
\end{align*}
is $\mathrm{Spin}(n)$-equivariant and invertible for fixed $\xi$, and thus defines a geometric operator
$$
\tilde{D}_{\lambda,I}: \bigoplus_{\epsilon \in I}\Gamma(V_{\lambda+\epsilon}M)\to \Gamma(V_{\lambda}M).
$$
By Schur's Lemma, the restriction of $\pi_{\lambda,I}^{*}$ to $\mathbb{R}^{n}\otimes V_{\lambda+\epsilon}$ is a multiple of the projection
$$
\pi_{\lambda+\epsilon,\lambda}: \mathbb{R}^{n}\otimes V_{\lambda+\epsilon}\to V_{\lambda},
$$
and thus, the restriction of
$\tilde{D}_{\lambda,I}$ to $\Gamma(V_{\lambda+\epsilon}M)$ is the multiple of an overdetermined elliptic generalized gradient.
Therefore, the ellipticity of $D_{\lambda,I}$ implies that for each $\epsilon \in I$, the operator $D_{\lambda+\epsilon,\lambda}$ is an overdetermined elliptic generalized gradient and $\{ -\epsilon \}$ must be in the list of Theorem \ref{thm:bransonclassification}.

For $\lvert I \rvert = 2$, this leaves us only with the case $\dim M=4$ and
\begin{itemize}
	\item $\lambda_{1}-1 \geq\lambda_{2}\geq 0$, $I= \{ -\epsilon_{1}, \epsilon_{2} \}$,
	\item $\lambda_{1}-1 \geq-\lambda_{2}\geq 0$, $I= \{ -\epsilon_{1}, -\epsilon_{2} \}$.
\end{itemize}
Ellipticity of $D_{\lambda,I}$ would imply
\begin{align*}
 \dim V_{\lambda} = \dim V_{\lambda-\epsilon_{1}}+ \dim V_{\lambda+\epsilon_{2}} &\quad \text{for}\, \lambda_{2}\geq 0,\\
 \dim V_{\lambda} = \dim V_{\lambda-\epsilon_{1}}+ \dim V_{\lambda-\epsilon_{2}} &\quad \text{for}\, \lambda_{2}\leq 0.
\end{align*}
Invoking Weyl's dimension formula \cite[Theoem VI.1.7]{brockerRepresentationsCompactLie1985}, applied to $\mathrm{Spin}(4)$,
$$
  \dim V_{\mu} = (1+ \mu_{1}+\mu_{2})(1+\mu_{1}-\mu_{2}),
$$ 
we estimate for $\lambda_{1}-1 \geq\lambda_{2}\geq 0$
\begin{align*}
 \dim V_{\lambda-\epsilon_{1}} + \dim V_{\lambda+\epsilon_{2}}-\dim V_{\lambda} =& (2+\lambda_{1}+\lambda_{2})(\lambda_{1}-\lambda_{2})+ (\lambda_{1}+\lambda_{2})(\lambda_{1}-\lambda_{2})\\
 &- (1+\lambda_{1}+\lambda_{2})(1+\lambda_{1}-\lambda_{2})\\
 =& -2\lambda_{2}-1+\lambda_{1}^{2}-\lambda_{2}^{2}\\
 \geq & -\lambda_{1} -\lambda_{2} +\lambda_{1}^{2}-\lambda_{2}^{2} \\
 =& \left(\lambda_{1}- \frac{1}{2}\right)^{2} - \left(\lambda_{2}+\frac{1}{2}\right)^{2} \geq 0
\end{align*}
with equality if and only if $\lambda_{1}= \lambda_{2}+1$. For $\lambda_{1}-1 \geq-\lambda_{2}\geq 0$ we obtain similarly
\begin{align*}
	\dim V_{\lambda-\epsilon_{1}} + \dim V_{\lambda-\epsilon_{2}}-\dim V_{\lambda} &\geq \left(\lambda_{1}-\frac{1}{2}\right)^{2} -\left(\lambda_{2}-\frac{1}{2}\right)^{2} \geq 0
\end{align*}
with equality if and only if $\lambda_{1}= 1-\lambda_{2}$.

For the case $\lvert I \rvert=1$, the only candidates for elliptic generalized gradients are
\begin{itemize}
	\item $D_{\lambda,\{ 0 \}}$ for $\dim M= 2m+1$, $\lambda \in\left( \frac{1}{2}+\mathbb{Z} \right)^{m}$,
	\item $D_{\lambda,\{ -\epsilon_{m} \}}$ for $\dim M=2m$, $\lambda_m>0$,
	\item $D_{\lambda,\{ \epsilon_{m} \}}$ for $\dim M=2m$, $\lambda_m<0$.
\end{itemize}
It is clear that in the odd-dimensional case $D_{\lambda,\{ 0 \}}$ is indeed elliptic. 
In the even-dimensional case, the ellipticity for $D_{\lambda,\{ -\epsilon_{m} \}}$ with $\lambda_{m}> 0$ implies the ellipticity of $D_{\lambda-\epsilon_{m},\{ \epsilon_{m} \}}$. The latter operator is only overdetermined elliptic if $\lambda_{m}-\epsilon_{m}<0$.
Consequently, $D_{\lambda,{-\epsilon_{m}}}$ is elliptic if and only if $\lambda$ is such that $\lambda_{m}=\frac{1}{2}$.  By analogy, we get that $D_{\lambda,\epsilon_{m}}$ is elliptic for $\lambda=-\frac{1}{2}$.

\begin{cor}\label{cor:ellipticgeneralizedgradients}
	The only elliptic generalized gradients are:
	\begin{itemize}
		\item $\dim M=4$, $D_{(\mu+1,\mu), \{ -\epsilon_{1},\epsilon_{2} \}}$ for $\mu\geq 0$,
		\item $\dim M=4$, $D_{(\mu+1, -\mu), \{ -\epsilon_{1},-\epsilon_{2} \}}$ for $\mu\geq 0$,
		\item $\dim M= 2m+1$, $D_{\lambda,\{ 0 \}}$ for $\lambda \in(\frac{1}{2}+\mathbb{Z})^{m}$,
		\item $\dim M=2m$, $D_{\lambda,{-\epsilon_{m}}}$ for $\lambda_{m}= \frac{1}{2}$,
		\item $\dim M=2m$, $D_{\lambda,{\epsilon_{m}}}$ for $\lambda_{m}= -\frac{1}{2}$.
	\end{itemize}
\end{cor}

\begin{defin}[\cite{buresEigenvaluesConformallyInvariant1999}]
	Let $\lambda$ be a dominant weight with $\lambda_{m}=\frac{1}{2}$. In odd dimensions, the higher Dirac operator $D_{\lambda}$ is defined to be the operator $D_{\lambda,\{ 0 \}}$. In even dimensions, the higher Dirac operator $D_{\lambda}$ is defined to be the operator
	\begin{equation*}
		D_{\lambda}= \begin{pmatrix}
		0 & D_{\lambda,-}\\
		D_{\lambda,+} & 0
		\end{pmatrix}
	\end{equation*}
	where $D_{\lambda,+}= D_{\lambda, -\epsilon_{m}}$ and $D_{\lambda,-}= D_{\bar{\lambda},\epsilon_{m}}$, where $\bar{\lambda}$ is the dominant weight
	\begin{equation*}
		\bar{\lambda}=(\lambda_{1},\dots,\lambda_{m-1},-\lambda_{m}).
	\end{equation*}
	The operator $D_{\lambda}$ is an elliptic first order differential operator acting on the vector bundle $V_{\lambda}\oplus V_{\bar{\lambda}}$.
\end{defin}

\begin{rmk}\label{rmk:higher_Dirac_operator}
	For $\lambda=(\frac{1}{2},\dots,\frac{1}{2})$ we obtain the classical Dirac operator, for $\lambda=(\frac{3}{2},\frac{1}{2},\dots,\frac{1}{2})$ we obtain the Rarita--Schwinger operator. A subclass of higher Dirac operators appearing in the study of spinor-valued differential forms \cite{sommenMonogenicDifferentialForms1992,delangheCliffordAlgebraSpinorValued1992} are the operators associated to the weights
	\begin{equation*}
		\lambda_{j} =\big(\underbrace{\frac{3}{2},\dots,\frac{3}{2}}_{j\text{ times}},\frac{1}{2},\dots,\frac{1}{2}\big), \quad j\in \{ 0,\dots,m-1 \}.
	\end{equation*}
	We denote the corresponding Dirac operators by $D_{j}$.
\end{rmk}

The operators $(D_{(\mu+1,\pm\mu), \{ -\epsilon_{1},\pm\epsilon_{2} \}})_{\mu\geq 0}$ form a special class of elliptic generalized gradients that only appears in dimension four. The simplest members of this class, $D_{(1,0),\{ -\epsilon_{1},\pm \epsilon_{2} \}}$, are given by
\begin{equation*}
	\Omega^{1}(M)\ni \omega\mapsto (\delta \omega,\frac{1}{2}(1\pm*)d\omega)\in \Omega^{0}(M)\oplus \Omega^{2}_{\pm}(M),
\end{equation*}
where $\Omega^{k}(M)$ denotes the space of differential forms, $\Omega^{2}_{\pm}(M)$ the space of (anti-) self-dual differential forms, $d$ is the exterior differential and $\delta$ is the codifferential. We can use the operators $(D_{(\mu+1,\pm\mu), \{ -\epsilon_{1},\pm\epsilon_{2} \}})_{\mu\geq 0}$ to define a sequence of operators that has the signature operator as its first term.
Consider to this end the operators
\begin{align*}
	\tilde{P}_{\mu}^{-}:= D_{(\mu+1,\mu), \{ -\epsilon_{1},\epsilon_{2} \}} &  \colon V_{(\mu+1,\mu)} \to V_{(\mu,\mu)}\oplus V_{(\mu+1, \mu+1)} \\
	\tilde{P}_{\mu}^{+}:= D_{(\mu+1,-\mu), \{ -\epsilon_{1},-\epsilon_{2} \}}  &  \colon V_{(\mu+1,-\mu)}  \to V_{(\mu,-\mu)}\oplus V_{(\mu+1, -(\mu+1))}
\end{align*}
for $\mu\geq 0$
and define the $\mathrm{SO}(4)$-representations
\begin{align*}
	W_{\mu}^{+}  & := V_{(\mu,\mu)} \oplus V_{(\mu+1,-\mu)}\oplus V_{(\mu+1,\mu+1)}, \\
	W_{\mu}^{-} & := V_{(\mu,-\mu)} \oplus V_{(\mu+1,\mu)} \oplus V_{(\mu+1,-(\mu+1))}, \\
	W_{\mu} & := W_{\mu}^{+}\oplus W_{\mu}^{-}. 
\end{align*}
\begin{defin}\label{def:highersignatureoperator}
	Let $(M,g)$ be an oriented four-dimensional Riemannian manifold.
	For $\mu\geq 0$, the higher signature operator $P_{\mu}$, acting on sections of the to $W_{\mu}$ associated vector bundle, is given by
	$$
	  P_{\mu}:= \begin{pmatrix}
		& \tilde{P}_{\mu}^{-}\oplus (\tilde{P}_{\mu}^{+})^{*} \\
		\tilde{P}_{\mu}^{+}\oplus (\tilde{P}_{\mu}^{-})^{*}
	  \end{pmatrix}
	$$
	with respect to the splitting $W_{\mu} = W_{\mu}^{+}\oplus W_{\mu}^{-}$.
\end{defin}

\begin{rmk}
	We have defined $P_{\mu}$ in such a way that for $\mu=0$ the representations $W_{0}^{\pm}$ are canonically isomorphic to the space of self-dual/anti-self-dual forms, and that under these identifications, the operator $P_{0}$ coincides with the full signature operator $d+ \delta$.
\end{rmk}

\subsection{Chiral geometric operators}\label{sec:chiral_geometric_operators}
In order to prove the local index theorem, we need to introduce the notion of chirality. Namely, given a universal elliptic symbol $\sigma : \mathbb{R}^{n}\otimes V \rightarrow V$ we need that the orientation of the manifold defines a $\mathbb{Z}_2$-grading on $V$, with respect to which the symbol is odd.

\begin{defin}\label{def:chiralgeomsymbol}
	Let $H_n\in\{ \mathrm{Pin}(n),\mathrm{O}(n)\}$, $V$ an $H_n$-representation and let $G_n\subseteq H_n$ be the connected component of the neutral element. 
	A chiral universal elliptic ($G_n$-)symbol is given by a pair $(\sigma,\varepsilon)$ of an universal elliptic $H_n$-symbol $\sigma: \mathbb{R}^n\to \mathrm{End}(V)$ and an $H_n$-equivariant map
	\begin{equation*}
		\varepsilon: \Lambda^n\mathbb{R}^n\rightarrow\mathrm{End}(V),
	\end{equation*}
	such that $\sigma(\xi)$ is skew-adjoint for all $\xi\in\mathbb{R}^n$ as well as
	\begin{align*}
		\varepsilon(\mathbf{e}_1\wedge...\wedge\mathbf{e}_n)^2= 1 \quad\text{and}\quad
		\sigma\cdot \varepsilon(\mathbf{e}_1\wedge\cdots \wedge \mathbf{e}_n)=-\varepsilon(\mathbf{e}_1\wedge\dots \wedge \mathbf{e}_n)\cdot\sigma,
	\end{align*}
	where $\mathbf{e}_1,...,\mathbf{e}_n$ is the standard basis of $\mathbb{R}^n$. A geometric operator obtained from a chiral geometric symbol is said to be a chiral geometric operator.
\end{defin}

\begin{example}
	The most classical examples of chiral geometric operators are the signature operator and the Dirac operator on an oriented $n=2m$-dimensional manifold. The signature operator arises from the elliptic $\mathrm{O}(n)$-symbol
	\begin{equation*}
		\sigma_{\mathrm{sign}}:\mathbb{R}^{n}\times \Lambda ^{*}\mathbb{C}^{n}\to \Lambda ^{*}\mathbb{C}^{n}, \quad \sigma(\xi)\omega:= \xi \wedge \omega- \iota_{\xi}\omega, 
	\end{equation*}
	where $\iota_{\xi}$ is the insertion operator and $\epsilon$ is given by 
	\begin{equation*}
		\epsilon_{\mathrm{sign}}(e_{1}\wedge\dots \wedge e_{n})(\psi)= i^{k(k-1)+m}\iota_{\psi}(e_{1}\wedge\dots \wedge e_{n}), \quad \psi \in \Lambda^{k}\mathbb{C}^{n}.
	\end{equation*} 
	The elliptic $\mathrm{Pin}(n)$-symbol of the Dirac operator is given by Clifford multiplication on the complex spinor space $\Sigma_n$ and $\varepsilon$ is given by 
	$$\varepsilon(\mathbf{e}_1\wedge\dots\wedge \mathbf{e}_n) = \omega_\mathbb{C} = i^m \mathbf{e}_1\cdot \dots \cdot \mathbf{e}_n\in \mathbb{C}l_n.$$
\end{example}

Let $G_n \in\{ \mathrm{SO}(n),\mathrm{Spin}(n)\}$ and let $(\sigma,\varepsilon)$ be a universal elliptic $G_n$-symbol.
Let $V^\pm$ be the $\pm 1$-eigenspaces of $\varepsilon(\mathbf{e}_1\wedge \dots \wedge \mathbf{e}_n)$. Then $\sigma(\mathbb{R}^{n}\otimes V^\pm)=V^\mp$ and $V^\pm$ are $G_n$-representations. The restrictions
$$\sigma^\pm : \mathbb{R}^n\otimes V^\pm\rightarrow V^\mp$$
are $G_n$-equivariant. Hence, given a Riemannian $G_n$-manifold $(M,P)$, the chiral universal elliptic symbol induces a $\mathbb{Z}_2$-grading $VM = V^+M\oplus V^-M$. Furthermore $V^\pm M$ can be identified as the $\pm 1$-eigenspaces of $\bar{\varepsilon}(\mathrm{dvol}_g)$. We obtain the geometric operators 
\begin{align*}
	D_{\sigma}&\colon C^\infty(M,VM)\rightarrow C^\infty(M,VM), \\ D_{\sigma^+}&\colon C^\infty(M,V^+M)\rightarrow C^\infty(M,V^-M), \\ D_{\sigma^-} &\colon C^\infty(M,V^-M)\rightarrow C^\infty(M,V^+M), 
\end{align*}
such that 
\begin{equation}\label{eq:splitting}
	D_\sigma = \begin{pmatrix}
		0 & D_{\sigma^-}\\ D_{\sigma^+} & 0 
	\end{pmatrix}.
\end{equation}
Since $D$ is formally self-adjoint, we have that $(D_{\sigma^+})^*= D_{\sigma^-}$.

Given an $H_n$-structure $P$ on an orientable, connected manifold $M$, where $H_n\in\{\mathrm{O}(n),\mathrm{Pin}(n)\}$, the choice of an orientation 
$$\smallO\in \pi_0(\mathrm{GL}(M))=\pi_0(\mathrm{O}_g(M))$$
induces a $G_n$-structure $P_{\smallO}$, where $G_n$ is the identity component of $H_n$. Namely, the connected component $P_{\smallO}$ of $P$ that covers the connected component $\smallO$ of $\mathrm{O}(M)$ is indeed a principal $G_n$-bundle which defines a $G_n$-structure on $M$.

In general, if $M$ is orientable, $\pi_0(\mathrm{GL}(M))$ consists of two elements. For a choice of orientation $\smallO$, the complementary element will be denoted by $\bar{\smallO}$.

\begin{prop}\label{prop:chirality}
	Let $H_n$, $G_n$ be as above and $(\sigma,\varepsilon)$ be a chiral universal elliptic symbol acting on an $H_n$-representation $V$. Let $(M,P)$ be an orientable $H_n$-manifold and $\smallO$ be an orientation of $M$. Denote the induced $G_n$-structures by $P_{\smallO}$, $P_{\bar{\smallO}}$.
	Then we have the following equalities:
	\begin{align*}
		V_{\smallO}M= V_{\bar{\smallO}}M=VM,\quad V^\pm_{\smallO}M=V^\mp_{\bar{\smallO}}M, \quad
		D_{\sigma^\pm,\smallO}=D_{\sigma^\mp,\bar{\smallO}}.
	\end{align*}
\end{prop}

\begin{proof}
	An element $h\in H_n\setminus G_n$ defines a diffeomorphism 
	\begin{align*}
		P_{\smallO}&\rightarrow P_{\bar{\smallO}}\\
		p&\mapsto p\cdot h
	\end{align*}
	Furthermore, we have canonical inclusions 
	$$ \iota_{\smallO} : V_{\smallO}M\rightarrow VM, \quad \iota_{\bar{\smallO}} : V_{\bar{\smallO}}M\rightarrow VM$$
	induced from the inclusions $P_{\smallO}, P_{\bar{\smallO}}\hookrightarrow P$.
	But every element of $[p,v]\in VM$ can be represented by an element of $ V_{\smallO}M$ since
	$$[p,v]= [p\cdot h, h^{-1}v]\in V_{\smallO}M \quad \text{if }p\notin P_{\smallO},$$
	the same holds for $ V_{\bar{\smallO}}M$.
	Consequently, $\iota_{\smallO}$ and $\iota_{\bar{\smallO}}$ are vector bundle isomorphisms and hence the first equality follows. 
	
	The other two equalities follow from the fact that $V^\pm_{\smallO}M$ is given by the $\pm 1$-eigenspaces of $\bar{\varepsilon}(\mathrm{dvol}_{g,\smallO})$ and the fact that 
	\begin{equation*}
		\mathrm{dvol}_{g,\smallO}= -\mathrm{dvol}_{g,\bar{\smallO}}.\qedhere
	\end{equation*}
\end{proof}

\subsection{Examples of chiral geometric operators}
For the construction of chiral geometric operators, we give a short recap of the representation theory for $\mathrm{O}(n)$ and $\mathrm{Pin}(n)$ following \cite[Chapter VI.7]{brockerRepresentationsCompactLie1985}.

Let $H_{n}\in \{ \mathrm{O}(n), \mathrm{Pin}(n) \}$ and $G_{n}\in \{ \mathrm{SO}(n), \mathrm{Spin}(n) \}$ such that $G_{n}\subseteq H_{n}$, $n=2m$. 
For a $G_{n}$-module $V$ and $h\in H_{n}$ we can define a twisted $G_{n}$-module $V_{h}$ by
$$
G_{n}\times V\to V, \quad (g,v)\mapsto hgh^{-1}v.
$$
The isomorphism type of $V_{h}$ depends only on the coset $hH$, consequently $(V_{x})_{y}=V_{xy}$ and $V_{1}=V$. Thus, $H_{n}/G_{n}\cong \mathbb{Z}_{2}$ acts on the set of dominant weights of $\mathrm{Spin}(n)$. 

Now let $U$ be an $H_{n}$-module. Let $x$ be the generator of $H_{n}/G_{n}$ and let $\Omega(k)$, $k\in \{ 0,1 \}$ be the representation 
$$
H_{n}/G_{n}\times \mathbb{C}\to \mathbb{C}, \quad (x,z)\mapsto \exp(\pi ikx)\cdot z,
$$
where we have identified $H_{n}/G_{n}$ with $\mathbb{Z}_{2}$.
Then, $\Omega(k)$ is also an $H_{n}$-representation and we obtain $H_{n}$-modules $U \cong U\otimes \Omega(0)$ and $U\otimes \Omega(1)$.
It turns out that if $V$ was an irreducible $G_{n}$-module, then so is $V_{x}$ and if $U$ is an irreducible $H_{n}$-module, then so is $U\otimes \Omega(k)$.
We can now divide the irreducible $H_{n}$- and $G_{n}$-modules into two types:

\begin{center}
	\begin{tabular}[h]{c|c|c}
		& type $\mathrm{I}$ & type $\mathrm{II}$ \\ 
		\hline
		 $G_{n}$-modules $V$ & all $V_{x}$ isomorphic & all $V_{x}$ distinct \\
		 $H_{n}$-modules $U$ & all $U\otimes \Omega(k)$ distinct  & $U\otimes \Omega(k)$ isomorphic 
	\end{tabular}
\end{center}

Let $\mathrm{res}_{G_{n}}U$ be the restriction of an $H_{n}$-module $U$ to $G_{n}$ and $\mathrm{ind}_{G_{n}}V$ be the $H_{n}$-module induced from $V$. This can be realised, for example, by
$$
\mathrm{ind}_{G_{n}}V = \{ f\colon H_{n} \to V\text{ cont.}\mid f(hg^{-1})= gf(h) \, \forall  g\in G_{n}, h\in H_{n}\}
$$
with $H_{n}$ acting by $(hf)(x)= f(h^{-1}x)$.

\begin{thm}[\cite{brockerRepresentationsCompactLie1985}]\label{thm:restriction/inductionofmodules}
	If $U$ is an irreducible $H_{n}$-module of type $\mathrm{I}$, then $\mathrm{res}_{G_{n}}U=V$ is an irreducible $G_{n}$-module of type $\mathrm{I}$ with $\mathrm{ind}_{G_{n}}V \cong \bigoplus_{k\in \mathbb{Z}_{2}}U\otimes \Omega(k)$.
	If $U$ in an irreducible $H_{n}$-module of type $\mathrm{II}$, then $\mathrm{res}_{G_{n}}U\cong \bigoplus_{x \in H_{n}/G_{n}}V_{x}$ with $V_{x}$ irreducible of type $\mathrm{II}$ and $\mathrm{ind}_{G_{n}}V_{x}\cong U$ for all $x \in H_{n}/G_{n}$.
\end{thm}

\begin{prop}[\cite{brockerRepresentationsCompactLie1985}]\label{prop:dominantweightO(n)}
	An irreducible $G_{n}$-module $V_{\lambda}$ with dominant weight $\lambda=(\lambda_{1},\dots,\lambda_{m})$ is of type $\mathrm{I}$ if and only if $\lambda_{m}=0$. If $V_{\lambda}$ is of type $\mathrm{II}$, and for $x \in H_{n}\setminus G_{n}$, the twisted $G_{n}$-module $(V_{\lambda})_{x}$ has dominant weight $\bar{\lambda}:=(\lambda_{1},\dots, \lambda_{m-1},-\lambda_{m})$.
\end{prop}

\subsubsection{Higher Dirac operators}

For a dominant weight $\lambda$ with $\lambda_{m}> 0$, let $U_{\lambda}= \mathrm{ind}_{G_{n}} V_{\lambda}$. Denoting $\bar{\lambda} = (\lambda_{1},\dots,\lambda_{m-1},-\lambda_{m})$,
the equation
\begin{equation*}
	\mathrm{res}_{G_{n}} U_{\lambda} = V_{\lambda}\oplus V_{\bar{\lambda}}
\end{equation*} 
holds by Proposition \ref{prop:dominantweightO(n)}. Let $\varepsilon: U_{\lambda}\to U_{\lambda}$ be the homomorphism defined by 
\begin{equation*}
	\varepsilon = \mathrm{id}_{V_{\lambda}} \oplus - \mathrm{id}_{V_{\bar{\lambda}}}.
\end{equation*}
By definition, the homomorphism $\varepsilon$ is $G_{n}$-invariant. An element $g\in H_{n}\setminus G_{n}$ maps $V_{\lambda}$ into $V_{\bar{\lambda}}$ and vice versa, and thus
\begin{equation*}
	g \cdot\varepsilon \cdot g^{-1} = \det(g) \cdot\varepsilon, \quad g\in H_{n}.
\end{equation*}
The map $\varepsilon$ therefore induces an $H_{n}$-equivariant map
\begin{equation*}
	\epsilon:\Lambda^{n}\mathbb{R}^{n}\to U_{\lambda}\otimes U_{\lambda}^{*}.
\end{equation*}

Let $\lambda$ be a dominant weight with $\lambda_{m}=\frac{1}{2}$ and $D_{\lambda}$ be the corresponding higher Dirac operator. It is an $\mathrm{Pin}(n)$-geometric operator, its symbol $\sigma_{\lambda}$ is given by the projection
\begin{equation*}
	U_{\lambda}\otimes \mathbb{R}^{n} \to U_{\lambda}.
\end{equation*}
Since $D_{\lambda}$ intertwines $V_{\lambda}$ and $V_{\bar{\lambda}}$, the operator is odd with respect to $\epsilon$. 

\begin{prop}\label{prop:higherdiracoperators}
	For a dominant weight $\lambda$ with $\lambda_{m}=\frac{1}{2}$, the higher Dirac operator $D_{\lambda}$ is a chiral geometric operator. 
\end{prop}

\begin{proof}
	The only thing that remains to be checked is the self-adjointness of $D_{\lambda}$. 
	The operator $D_{\lambda}$ can be alternatively obtained from the classical Dirac operator $D^{\lambda^{\prime}}$ twisted with the bundle induced from the $\mathrm{Spin}(n)$-module $V_{\lambda^{\prime}}$, where
	\begin{equation*}
		\lambda^{\prime}= \big(\lambda_{1}-\frac{1}{2},\dots,\lambda_{m-1}-\frac{1}{2},0\big),
	\end{equation*}
	see e.g. \cite{buresEigenvaluesConformallyInvariant1999}.
	The representation $V_{\lambda}$ appears with multiplicity one in the module 
	$$
	  \Sigma_{n}\otimes V_{\lambda^{\prime}},
	$$
	and $D_{\lambda}$ is obtained from $D^{\lambda^{\prime}}$ by the concatenation
	\begin{equation*}
		V_{\lambda}\otimes \mathbb{R}^{n}\hookrightarrow \Sigma_{n}\otimes V_{\lambda^{\prime}}\otimes \mathbb{R}^{n} \xrightarrow{\mathrm{cl}\otimes \mathrm{id_{V_{\lambda^{\prime}}}}} \Sigma_{n}\otimes V_{\lambda^{\prime}} \twoheadrightarrow V_{\lambda},
	\end{equation*}
	$\mathrm{cl}$ being the Clifford multiplication.
	The self-adjointness of the Dirac operator implies the self-adjointness of $D_{\lambda}$.
\end{proof}

\subsubsection{Higher signature operators}

For $\mu \in \mathbb{N}_{0}$, consider the higher signature operator $P_{\mu}$. It is clear from Proposition \ref{prop:dominantweightO(n)} that for $x \in \mathrm{O}(4) \setminus \mathrm{SO}(4)$ 
\begin{align*}
	(V_{(\mu,\pm \mu)})_{x} \cong V_{(\mu,\mp \mu)}, \quad (V_{(\mu+1,\pm \mu)})_{x} \cong V_{(\mu+1,\mp \mu)},\quad (V_{(\mu+1,\pm (\mu+1))})_{x} \cong V_{(\mu+1,\mp (\mu+1))}.
\end{align*}
Defining
\begin{align*}
	V_{\mu}^{1} = \mathrm{ind}_{\mathrm{SO}(4)} V_{(\mu,\mu)}, \quad V_{\mu}^{2} = \mathrm{ind}_{\mathrm{SO}(4)} V_{(\mu+1,\mu)}, \quad V_{\mu}^{3} = \mathrm{ind}_{\mathrm{SO}(4)} V_{(\mu+1,\mu+1)},
\end{align*}
Fegan's selection rule \cite[Theorem 3.4]{feganConformallyInvariantFirst1976} implies that $V_{\mu}^{1}$ and $V_{\mu}^{3}$ appear with multiplicity one in the $\mathrm{O}(4)$-representation $\mathbb{R}^{4}\otimes V_{\mu}^{2}$. Thus there exists an $\mathrm{O}(4)$-equivariant projection
$$
  \tilde{\sigma} : \mathbb{R}^{4}\otimes V_{\mu}^{2}\to V_{\mu}^{1}\oplus V_{\mu}^{3},
$$
which induces the generalized gradient $\tilde{P}_{\mu}$. As a $\mathrm{SO}(4)$-geometric operator, the operator splits into
$$
  \tilde{P}_{\mu} = \tilde{P}_{\mu}^{+}\oplus \tilde{P}_{\mu}^{-}.
$$
The higher signature operator is then given by $P_{\mu}= \tilde{P}_{\mu}\oplus \tilde{P}_{\mu}^{*}$, the discussion shows that it is a $\mathrm{O}(4)$-geometric operator acting on sections of the vector bundle $W_{\mu}= V^{1}_\mu \oplus V^{2}_\mu \oplus V^{3}_\mu$. It is self-adjoint by definition.
As for the higher Dirac operator, we can set the chirality operator $\epsilon$ to
\begin{equation*}
	\varepsilon = \mathrm{id}_{W_{\mu}^{+}} \oplus - \mathrm{id}_{W_{\mu}^{-}}.
\end{equation*}
It is easily checked that it satisfies all the requirements of Definition \ref{def:chiralgeomsymbol}, the name is justified by the fact that $P_{1}$ is the usual signature operator.

We can calculate the index of $P_{\mu}$ for $\mu\geq 1$ as follows: 
Steinberg's multiplicity formula and Fegan's selection rule imply the splittings
\begin{align*}
	V_{(\mu,\pm \mu)}\otimes V_{(1,\pm{1})} &\cong V_{(\mu-1,\pm(\mu-1))}\oplus V_{(\mu,\pm \mu)} \oplus V_{(\mu+1,\pm(\mu+1))},\\
	V_{(\mu,\pm \mu)} \otimes V_{(1,0)}&\cong V_{(\mu+1,\pm\mu)} \oplus V_{(\mu,\pm(\mu-1))}.
\end{align*}
Applying the Chern character yields the recurrence relations
\begin{align*}
	\mathrm{ch}(V_{(\mu+1,\pm(\mu+1))})&= \mathrm{ch}(V_{(\mu,\pm \mu)})(\mathrm{ch}(V_{(1,\pm 1)})-1)- \mathrm{ch}(V_{(\mu-1,\pm(\mu-1))})\\
	\mathrm{ch}(V_{(\mu+1,\pm\mu)}) &= \mathrm{ch}(V_{(\mu,\pm\mu)})\cdot \mathrm{ch}(V_{(1,0)})-\mathrm{ch}(V_{(\mu,\pm(\mu-1))})
\end{align*}
Writing $\mathrm{ch(V_{(1,\pm 1)})} = 3 + c_{1}^{\pm}$, where $c_{1}^{\pm}$ is the degree-$4$-part of the Chern character, we can solve these recursions obtaining
\begin{align*}
	\mathrm{ch}(V_{(\mu,\pm \mu)}) &= 1+2\mu +  \left( \frac{1}{6}\mu+\frac{1}{2}\mu^{2}+\frac{1}{3}\mu^{3} \right)c_{1}^{\pm},\\
	\mathrm{ch}(V_{(\mu+1,\pm \mu)})_{4} &= \frac{2}{3}(\mu^{3}+3\mu^{2}+2\mu)c_{1}^{\pm} + \left( \sum_{\lambda=0}^{\mu}(-1)^{\mu-\lambda}(1+2\lambda) \right)(\operatorname{ch}V_{(1,0)})_{4},
\end{align*}
where $\mathrm{ch}(V_{(\lambda_{1},\lambda_{2})})_{4}$ denotes the degree $4$-part of $\mathrm{ch}(V_{(\lambda_{1},\lambda_{2})})$.
An easy calculation involving the Atiyah--Singer index theorem (see Theorem \ref{thm:atiyahsinger}) shows that the index of the higher signature operator is given by
\begin{equation*}
	\operatorname{ind} P_{\mu}^{+}= \left(1+\mu\right) \operatorname{ind} P_{0}^{+} = \left( 1+\mu \right)L[M],
\end{equation*}
where in the last step we have used that the index of the signature operator is given by the $L$-genus $L[M]$ of $M$ (see \cite[Theorem 6.6]{atiyahIndexEllipticOperators1968}).

\section{The heat kernel expansion}\label{seq:hke}
Fix $G_{n}\in \{ \mathrm{SO}(n), \mathrm{Spin}(n) \}$ and let $H_{n}\in \{ \mathrm{O}(n), \mathrm{Pin}(n) \}$ be such that $G_{n}\subseteq H_{n}$. 
Let $(\sigma,\epsilon)$ be a chiral geometric symbol acting on an $H_{n}$-representation $V$ and $(M,P)$ be a $G_{n}$-manifold. Denote by $g$ the metric and by $\smallO$ the orientation of $M$. Let $\xi=(E,h, \nabla^{\xi})$ be a hermitian vector bundle in the sense of section \ref{sec:invarianttheory}. Consider the twisted geometric operator
$$
	D^{\xi}:C^\infty(M,VM \otimes E)\rightarrow C^\infty(M,VM\otimes E)
$$
obtained from $\sigma$ and $\xi$. Since $\nabla^{\xi}$ is a metric connection, $D^{\xi}$ is self-adjoint. The twisted chirality operator $\epsilon^{\xi}= \epsilon \otimes \mathrm{id}_E$ still is an involution which induces the splitting
$$
  VM \otimes E = V^{+}M\otimes E \oplus V^{-}M\otimes E.
$$
The twisted geometric operator $D^{\xi}$ is odd with respect to this splitting and hence the heat semigroup of $(D^{\xi})^{2}$ takes the form
$$\exp(-t(D^{\xi})^2)=\begin{pmatrix}
	\exp(-tD_-^{\xi}D_+^{\xi}) & 0 \\ 0 & \exp(-tD_+^{\xi}D_-^{\xi})
\end{pmatrix}.$$
It is a classical fact that the heat semigroup $\exp(-t(D^{\xi})^2)$ consists of smoothing operators. The index of $D_+^{\xi}$ is given by the formula
\begin{align*}
	\ind(D_{+}^{\xi}) &= \tr_{L^2} \exp(-tD_-^{\xi} D_+^{\xi}) - \tr_{L^2}\exp(-tD_+^{\xi} D_-^{\xi})\\
	&= \int_M \left(\tr \exp(-tD_-^{\xi} D_+^{\xi})(x,x) - \tr \exp(-tD_+^{\xi} D_-^{\xi})(x,x)\right) \mathrm{dvol}_{g,\smallO}\\
	&= \int_M \mathrm{str}(\exp(-t(D^{\xi})^2)(x,x))\mathrm{dvol}_{g, \smallO}, 
\end{align*}
where for $A\in VM \otimes E\otimes (VM\otimes E)^\ast$ we have set 
$$\mathrm{str}(A)=\mathrm{tr}(\epsilon(\mathrm{dvol}_{g,\smallO})A)$$
and where we integrate with respect to the integral of $n$-forms induced by the orientation $\smallO$ of $M$.

\begin{defin}
	A sequence of endomorphism-valued sections $\varPhi_k\in C^\infty(M, \mathrm{End}(VM\otimes E))$ are said to be an asymptotic expansion for $\exp(-t(D^{\xi})^2)$ if for all $m\in\mathbb{N}$, there exists $N\in \mathbb{N}$, such that 
	$$\lVert \exp(-t(D^{\xi})^2)(x,x)-\sum_{k\leq N}\varPhi_k(x)t^{\frac{k-n}{2}}\rVert_{C^k}\leq C t^m$$
	holds for all $t\in (0,1)$. Write
	$$\exp(-t(D^{\xi})^2)(x,x) \sim \sum_{k = 0}^{\infty}\varPhi_k(x)t^{\frac{k-n}{2}}.$$
\end{defin}

The construction of the asymptotic expansion of the heat kernel in this article is taken from Gilkey's book \cite{gilkeyInvarianceTheoryHeat1994}.
By Gilkey \cite[Lemma 1.8.2, (1.8.3), Lemma 1.7.2]{gilkeyInvarianceTheoryHeat1994}, the $\varPhi_k$ are constructed locally as follows:\\
In a chart of $M$ we can expand the symbol $\sigma(A)$ of $A$ as
$$
  \sigma((D^{\xi})^{2})(\xi)= \sum_{0 \leq k \leq m} a_{k}(\xi)
$$
where $a_{k}(\xi)$ is a homogeneous polynomial of degree $k$ in $\xi$ and defined by
$$
  (D^{\xi})^{2}= \sum_{\left\lvert \alpha\right\rvert \leq m} a_{\alpha}(x)D_{x}^{\alpha}, \quad a_{k}(\xi)= \sum_{\left\lvert \alpha\right\rvert=k} a_{\alpha}(x)\cdot \xi^{\alpha},
$$
where  $D_{x}^{\alpha}= (-i)^{\left\lvert \alpha\right\rvert} \frac{ \partial^{\alpha}  }{\partial x^{\alpha} } $.
For $\lambda\in \mathbb{C}\setminus\{x\in\mathbb{R}\mid x\geq 0\}$, we approximate the symbol of the parametrix $((D^{\xi})^2-\lambda)^{-1}$ by inverting the symbol of $(D^{\xi})^2-\lambda$ formally using the following series:
\begin{subequations}
	\begin{equation}\label{eq:recsymbol1}
		b_{-2}(x,\xi,\lambda) = (a_2(x,\xi)-\lambda)^{-1},
	\end{equation}
	\begin{equation}\label{eq:recsymbol2}
		b_{-k-2}(x,\xi,\lambda) = -\left(\sum_{\substack{
				\lvert\alpha\rvert+j+l=k\\
				j< k
		}} \frac{1}{\alpha !}\frac{\partial^\alpha b_{-2-j}}{\partial \xi^\alpha}(x,\xi,\lambda)\cdot D_x^\alpha a_{2-l}(x,\xi)\right)\cdot b_{-2}(x,\xi,\lambda).
	\end{equation}
\end{subequations}
In order to obtain the asymptotic expansion $\varPhi_k$ for the heat kernel, we integrate clockwise over a contour $\Gamma \subseteq \mathbb{C}$ enclosing the nonnegative real axis and subsequently integrate over $\xi$:
\begin{align}\label{eq:asymptotics}
		\varPhi_k(x) = \sqrt{\det g(x)}^{-1}\frac{1}{2\pi i}\iint_\Gamma e^{-\lambda}b_{-2-k
		}(x,\xi,\lambda)d\lambda d\xi.
\end{align}
The meromorphicity  of $b_k(x,\xi,\lambda)$ in $\lambda$ implies that $\varPhi_{k}$ does not depend on the contour $\Gamma$. Athough $\varPhi$ is locally constructed, it glues to a well-defined section of the endomorphism bundle of $VM\otimes E$.
We obtain the following asymptotic expansion of the heat supertrace:
\begin{align*}
	\mathrm{str}(\exp(-tD^2)(x,x))\mathrm{dvol}_{g,\smallO} \sim & \sum_{k=0}^\infty\left(\mathrm{str}(\varPhi_k(x))\mathrm{dvol}_{g,\smallO}\right) t^{\frac{k-n}{2}}.
\end{align*}

\begin{defin}
	Let $(\sigma,\epsilon)$ be a chiral geometric symbol, $(M,P)$ a $G_{n}$-manifold and $\xi$ a hermitian vector bundle over $M$. The $k$-th heat form of the corresponding twisted geometric operator is given by
	\begin{equation}\label{eq:heatcoefficient}
		\omega_k(P, \xi)(x)= \mathrm{str}(\varPhi_k(x))\mathrm{dvol}_{g,\smallO}.
	\end{equation}
\end{defin}

\begin{rmk}

	\begin{itemize}
		\item For $n= \dim M$, the $n$-th heat coefficient $\omega_{n}$ has the property
		\begin{equation}\label{eq:indexdensity}
			\mathrm{ind}(D_+^{\xi})= \int_M \omega_n(P,\xi).
		\end{equation}
		For this reason, $\omega_n$ is also referred to as the index density of $D_+^{\xi}$.
		\item For a local diffeomorphism $f: N \to M$ we have the relation $$
		  \omega_{k}( f^{*}P, f^{*}\xi) = f^{*}\omega_{k}(P, \xi).
		$$
		Thus, $\omega_{k}$ defines a natural transformation
		$$
		  \omega_{k} : \mathrm{Str}^{G_{n}} \times \mathrm{Vect}_\mathbb{C}^{m} \to \Omega^{n}.
		$$
	\end{itemize}
\end{rmk}

\begin{prop}\label{prop:nattran}
	Let $(\sigma, \epsilon)$ be a chiral geometric symbol. Let $M\in \mathbf{Man}_{n}^{G_{n}}$ and let $\xi$ be a hermitian vector bundle over $M$. Then if $P$, $P^{\prime}$ are two $G_{n}$-structures inducing the same Riemannian metric $g$ on $M$, the heat forms $\omega_k(P,\xi)$ and $\omega_{k}(P^{\prime},\xi)$ concide. 
  	The natural transformation $\omega_{k}$ factors through a natural transformation
	$$\omega_k\colon \mathrm{Met}\times \mathrm{Vect}_{\mathbb{C}}^{m}\rightarrow \mathrm{\Omega^n}.$$
\end{prop}

\begin{proof}
	We first consider the untwisted case.
	Let $U$ be a contractible open manifold with metric $g$. Let $P_{1}$ and $P_{2}$ be two $G_{n}$-structures inducing the metric $g$ and let $P_{i}^{H_{n}}=P_{i}\times_{G_{n}}H_{n}$ be the induced $H_{n}$-structures. If $\mathrm{O}_{g}(U)$ is the orthonormal frame bundle of $U$, we have a covering $\pi_{i}:P_{i}^{H_{n}}\to \mathrm{O}_{g}(U)$ induced by the identification of the tangent bundle $TU$ with $P\times_{G_{n}}\mathbb{R}^{n}$. Since $U$ is contractible, we can choose an orthonormal frame $(b_{1},\dots,b_{n})$ of $TU$, lifting these to sections $s_{i}:U\to P_{i}$ gives us a commutative diagram
	\[
	\begin{tikzcd}
		P_1^{H_n} \arrow[r, "\pi_1"] & O_g(U)                                                                          & P_2^{H_n}  \arrow[l, "\pi_2"] \\
									 & U \arrow[u, "{(b_1,...,b_n)}" description] \arrow[ru, "s_2"'] \arrow[lu, "s_1"] &                              
	\end{tikzcd}.
	\]
	This induces an isomorphism of $H_{n}$-bundles
	$$
		s_{21}:P_{1}^{H_{n}}\to P_{2}^{H_{n}}, \quad s_{1}(x)\cdot h\mapsto s_{2}(x)\cdot h
	$$
	such that 
	\[
	\begin{tikzcd}
		P_1^{H_n} \arrow[rr, "s_{21}"] \arrow[rd, "\pi_1"'] &                 & P_2^{H_n} \arrow[ld, "\pi_2"] \\
													& \mathrm{O}_g(U) &                             
	\end{tikzcd}
	\]
	commutes. For the connection form we have
	\begin{align*}
		s_{21}^{*}\omega^{LC}_{2} & =s_{21}^{*}\rho_{*}^{-1}\pi_{2}^{*}\omega^{LC} \\
 			& = \rho_{*}^{-1}(s_{21}^{*}\pi_{2}^{*}\omega^{LC}) \\
 			& = \rho_{*}^{-1} \pi_{1}^{*}\omega^{LC} = \omega_{1}^{LC},
	\end{align*}
	where $\rho_{*}: \mathfrak{g}_n \to \mathfrak{so}_{n}$ is the Lie algebra isomorphism induced by either the identity in the case that $G_{n}$ is $\mathrm{SO}(n)$ or the usual covering $\mathrm{Spin}(n)\to \mathrm{SO}(n)$ in the case that $G_{n}$ equals $\mathrm{Spin}(n)$.
	This shows that the induced vector bundle isomorphism 
	$$
		s_{21}^{V}:V_{1}U\to V_{2}U \text{ is parallel, i.e. }\nabla^{2}\circ s_{21}^{V} = s_{21}^{V}\circ \nabla^{1}.
	$$
	Moreover, for the symbol $\sigma$ we have
	$$
		\sigma_{2}\circ s_{21}^{V\otimes \mathbb{R}^{n}}= s_{21}^{V}\circ \sigma_{1},
	$$
	and thus $D_{2}\circ s_{21}^{V} =s_{21}^{V}\circ D_{1}$. Therefore, the heat asymptotics satisfy $\varPhi^{2}_{k}\circ s_{21}^{V}=s_{21}^{V}\circ \varPhi_{k}^{1}$. The same holds if we twist $D_{1}$ and $D_{2}$ with the hermitian bundle $\xi$.
	Similarly, we have for $\epsilon_{i}=[p_{i},\epsilon]\in P^{H_{n}}_{i}\times_{H_{n}}(\operatorname{End}V\otimes \Lambda^{n}\mathbb{R}^{n})$
	$$
		\epsilon_{2}\circ s_{21}^{V\otimes \Lambda^{n}\mathbb{R}^{n}} = s_{21}^{V}\circ \epsilon_{1}.
	$$
	Now there are two cases. The first case is $s_{21}(P_{1})=P_{2}$, then we have that for $p \in P_{1}$
	\begin{align*}
		\mathrm{dvol}_{1} & = (\Lambda^{n}\phi_{1})([p,e_{1}\wedge\dots \wedge e_{n}]) \\
 			& = (\Lambda^{n}\phi_{2})([s_{21}(p),e_{1}\wedge\dots \wedge e_{n}]) \\
 			& = \mathrm{dvol}_{2},
	\end{align*}
	where $\phi_{i}$ is the homomorphism identifying $P_{i}\times_{G_{n}} \mathbb{R}^{n}$ with the tangent bundle $TU$.
	If $s_{21}(P_{1})\neq P_{2}$, then for any $h\in H_{n}\setminus G_{n}$ $s_{21}(P_{1})\cdot h=s_{21}(P_{2})$. For the volume form this implies
	$$
		\mathrm{dvol}_{1}=-\mathrm{dvol}_{2}.
	$$
	Thus
	\begin{align*}
		\omega_{k}(P_{1},\xi) &  = \operatorname{tr}(\epsilon_{1}(\mathrm{dvol}_{1})\cdot \varPhi_{k}^{1})\mathrm{dvol}_{1} \\
 			& = \operatorname{tr}(\epsilon_{1}(\mathrm{dvol_{2}})\cdot \varPhi^{1}_{k})\mathrm{dvol_{2}} \\
 			& =\operatorname{tr}(s_{21}^{-1}\cdot\epsilon_{2}(\mathrm{dvol_{2}})\cdot \varPhi^{2}_{k}\cdot s_{21})\mathrm{dvol_{2}} \\
 			& = \operatorname{tr}(\epsilon_{2}(\mathrm{dvol_{2}})\cdot \varPhi^{2}_{k})\mathrm{dvol_{2}} \\
 			& = \omega_{k}(P_{2},\xi).   
	\end{align*}
	Therefore, the first part of the proposition is shown.

	For the second part, given any Riemannian manifold $(M,g)$ equipped with a hermitian vector bundle $\xi$, we can construct $\omega_{k}(g,\xi)$ locally in coordinates, the arguments above show that the construction is invariant under coordinate changes. From this follows also the naturality of $\omega_{k}(g,\xi)$, showing that it defines a natural transformation.
\end{proof}

\begin{prop}\label{prop:jointinvariant}
	Let $(\sigma,\epsilon)$ be a chiral geometric symbol. Then the natural transformation $\omega_{k}: \mathrm{Met}\times \mathrm{Vect}^{m}_\mathbb{C}\to \Omega_{n}$, defined in Equation \ref{eq:heatcoefficient}, is a joint geometric invariant.
\end{prop}

What remains to show is that $\omega_{k}(g,\xi)$ is, when expressed in a normal trivialization around a point $p$, given by a universal polynomial expression in the curvature tensors of $g$ and $\xi$ and its covariant derivatives. In order to achieve this, one first sees that the total symbol of $D^{\xi}$ only depends on the metric and $\Gamma^{\xi}$, this property then transfers to summands $b_{-k}$ in the asymptotic expansion of the symbol of the resolvent of $(D^{\xi})^{2}-\lambda$. As it turns out, the integrals in the definition \ref{eq:asymptotics} of $\Phi_{k}$ will not affect this either, yielding the statement after taking the trace.
  
\begin{proof}
	Let $(M,g)$ be a Riemannian manifold and $\xi$ be a Hermitian vector bundle over $M$. Let $p \in M$ and choose a normal trivialization centered at $p$, and let $A$ be the transition matrix defined by Equation \ref{eq:framechange}. Then as shown in the proof of Proposition \ref{prop:nattran}, we can without loss of generality assume that $M$ carries a $H_{n}$-structure $P$. Denote by $D^{\xi}$ the induced twisted geometric operator.
  
	With respect to the normal trivialisation, write
	$$
	  (D^{\xi})^{2} = \sum_{\lvert \alpha \rvert \leq 2} a_{\alpha}(x)D_{x}^{\alpha}, \quad a_{k}(x,\zeta) = \sum_{\lvert \alpha \rvert =k}a_{\alpha}(x)\zeta^{\alpha}
	$$
	From Lemma \ref{lem:opincoord}, we see that the $a_{\alpha}(x)$ are given by polynomials in $A^{i}_{j}$, $(A^{-1})^{i}_{j}$, $\Gamma^{g}$, $\Gamma^{\xi}$ and their derivatives. In particular, the principal symbol of $(D^{\xi})^{2}$ is given by
	$$
	  a_{2}(x,\zeta) = - (\sigma^{j}\otimes \mathrm{id})(\sigma^{l}\otimes \mathrm{id})A^{i}_{j}(x)A^{k}_{l}(x)\zeta_{i}\zeta_{l}
	$$
	Since $A(0)=\mathrm{id}$, we have 
	$$
	  a_{2}(0,\zeta)= -(\sigma^{j}\otimes \mathrm{id})(\sigma^{l}\otimes \mathrm{id})\zeta_{j}\zeta_{l}.
	$$
	By Corollary \ref{cor:taylor}, the coefficients $a_{\alpha}(x)$ and all their derivatives are given by polynomials in the covariant derivatives of $R_{ijkl}$, $K^{\mu}_{\nu kl}$ when evaluated at $x=0$.
  
	Define $b_{-m-k}$ by the recursion as in \ref{eq:recsymbol1}, \ref{eq:recsymbol2}, 
	\begin{equation*}
	  b_{-2}(x,\zeta,\lambda) = (a_2(x,\zeta)-\lambda)^{-1},
	\end{equation*}
	\begin{equation*}
	  b_{-2-k}(x,\zeta,\lambda) = -\Big(\sum_{\substack{
	  \lvert\alpha\rvert+j+l=n\\
	  j< k
	  }} \frac{1}{\alpha !}\partial_{\zeta}^\alpha b_{-2-j}(x,\zeta,\lambda)\cdot D_x^\alpha a_{2-l}(x,\zeta)\Big)\cdot b_{-2}(x,\zeta,\lambda).
	\end{equation*}
	Since $D_{x}^{\alpha}a_{2-l}(0,\zeta)$ is a polynomial in the covariant derivatives of the curvature tensors $R_{ijkl}$, $K^{\mu}_{\nu kl}$ at $0$, we see that if  $b_{-2-j}(0,\zeta,\lambda)$ is a polynomial in the covariant derivatives of $R_{ijkl}$, $K^{\mu}_{\nu kl}$ for $0\leq j< k$, then so is $b_{-2-k}(0,\zeta,\lambda)$.  Here, the coefficients of the polynomials are matrix-valued functions of $(\zeta,\lambda)$, however they do not depend on the chosen normal trivialization. 
	Since 
	$$
	  b_{-2}(0,\zeta,\lambda) = -((\sigma^{j}\otimes \mathrm{id})(\sigma^{l}\otimes \mathrm{id})\zeta_{j}\zeta_{l} +\lambda)^{-1}
	$$
	is completely independent of the metric, we obtain inductively that $b_{-2-k}(0,\zeta,\lambda)$ is indeed given by a polynomial in the covariant derivatives of $R_{ijkl}$, $K^{\mu}_{\nu kl}$, with the coefficients being rational functions in $(\zeta,\lambda)$.
	Then the integrals in Equation \ref{eq:asymptotics} of the heat asymptotics
	$$
	\varPhi_{k}(0) = \frac{1}{2\pi i} \iint_{\Gamma}e^{ -\lambda } b_{-2-k}(0,\zeta,\lambda)d\lambda d\zeta. 
	$$
	only affect the coefficients, and thus the dependence of $\varPhi_{k}(0)$ on the covariant derivatives of $R_{ijkl}$, $K^{\mu}_{\nu kl}$ remains polynomial. 
	Then, the local index coefficients are given by
	$$
	  \omega_{k}(g,\xi)(0)= \operatorname{tr}(\epsilon(\mathbf{e}_{1}\wedge\dots \wedge \mathbf{e}_{n})\otimes \mathrm{id} \cdot e_{k}(0)) \cdot\mathbf{e}_{1}\wedge\dots \wedge \mathbf{e}_{n},
	$$
	where $\mathbf{e}_{1},\dots,\mathbf{e}_{n}$ is the standard basis of $\mathbb{R}^{n}$. Hence, $\omega_{k}(g)(0)$ too, is given by a polynomial in the covariant derivatives of $R_{ijkl}$, $K^{\mu}_{\nu kl}$.
\end{proof}

\begin{prop}\label{prop:homogeneity}
	Let $(\sigma,\varepsilon)$ be a chiral geometric symbol acting on a $H_{n}$-representation $V$.
	Then $\omega_k$ is homogeneneous of weight $(n-k, 0)$ in $(g, \xi)$, i.e.
	$$\omega_k(\lambda^2 g, \mu^{2}\xi)= \lambda^{n-k}\omega_k(g, \xi), \quad \forall \lambda,\mu> 0.$$
\end{prop}

\begin{proof}
	Fix $\lambda,\mu > 0$. 

	Let $(M,g)$ be a Riemannian manifold and $\xi$ be a Hermitian vector bundle over $M$. W.l.o.g., we can assume that the metric is induced by a $G_{n}$-structure $(P,\phi)$ over $M$. The $G_{n}$-structure $(P, \lambda ^{-1} \phi)$ then induces the metric $\lambda^{2}g$. Let $D$, $D_{\lambda}$ be the induced geometric operators acting on sections of the vector bundle $VM = P\times_{G_{n}}V$. 

	Since the Levi-Civita connection is invariant under the metric change $g \mapsto \lambda^{2}g$, the induced connection $1$-forms on $P$ coincide. Thus both $G_{n}$-structures induce the same covariant derivative $\nabla$ on $VM$. 

	Denote by $\bar{\sigma}$, $\bar{\sigma}_{\lambda}$ the by $\sigma$ induced sections of $\mathrm{End}(VM)\otimes TM$. Let $p \in P$, then $\sigma_{\lambda}$ is given by
	\begin{align*}
		\bar{\sigma}_{\lambda}&= (\mathrm{id}_{\mathrm{End}(VM)}\otimes \phi_{\lambda})([p,\sigma]) \\
			&= \frac{1}{\lambda} (\mathrm{id}_{\mathrm{End}(VM)}\otimes \phi)([p,\sigma]) = \frac{1}{\lambda} \bar{\sigma}.
	\end{align*}
	Since $D$, $D_{\lambda}$ are defined as the contraction of $\sigma$, $\sigma_{\lambda}$ with $\nabla$, the identity
	$$
	  D_{\lambda} = \frac{1}{\lambda} D
	$$
	holds.
	Rescaling the hermitian bundle $\xi$ with $\mu$ does not affect the covariant derivative $\nabla^{\xi}$ of $\xi$, and thus we obtain the same equality for the twisted geometric operators $D^{\xi}_{\lambda}= \frac{1}{\lambda}D^{\xi}$. 

	Since $D^{\xi}$ is self-adjoint, there exists an orthonormal eigenbasis $\phi_{n}$ of $L^{2}(M;VM\otimes E^{\xi})$ with eigenvalues $\nu_{n}\in \mathbb{R}$.
	We have
	  $$\sum_{n\in \mathbb{N}} e^{-\nu_n^2 t} \mathrm{str}(\phi_n\otimes \phi_n^*) \mathrm{dvol}_g=\operatorname{str} \exp(-t(D^{\xi})^2)(x,x) \mathrm{dvol}_g \sim \sum \omega_k(g,\xi)(x)t^{\frac{k-n}{2}}.$$
	The spectrum of $D_{\lambda}^{\xi}$ is given by $(\frac{\nu_{n}}{\lambda})_{n\in \mathbb{N}}$ with eigenbasis $(\phi_{n})_{n\in \mathbb{N}}$. We now argue as Atiyah, Bott and Patodi in \cite{atiyahHeatEquationIndex1973}:
	  \begin{align*}
		  \sum \omega_k(\lambda^2 g, \mu^{2}\xi)t^{\frac{k-n}{2}}&\sim
		   \sum_{n\in \mathbb{N}} e^{-\frac{\nu_n^2}{\lambda^2} t} \operatorname{str}(\phi_n\otimes \phi_n^*) \cdot \mathrm{dvol}_g\\
		  &\sim \sum \omega_k(g,\xi)\left(\frac{t}{\lambda^2}\right)^{\frac{k-n}{2}}\\ 
		  &= \sum  \lambda^{n-k}\omega_k(g,\xi)t^{\frac{k-n}{2}}.
	  \end{align*}
	  The claim follows from the uniqueness of asymptotic expansions.
\end{proof}

Summarizing the results of Proposition \ref{prop:nattran}, Proposition \ref{prop:jointinvariant} and Proposition \ref{prop:homogeneity} and applying Gilkey's Theorem \ref{thm:Gilkey} we obtain the following preliminary version of the local index theorem:

\begin{thm}\label{thm:indexcoefficients}
	Let $(\sigma,\varepsilon)$ be a chiral geometric symbol and $\omega_k$ be the associated heat forms. Let $(M,P)$ be a $G_{n}$-manifold with Riemannian metric $g$, $\xi$ a hermitian vector bundle over $M$, and $D^{\xi}$ be the geometric operator associated to $\sigma$ twisted with $\xi$. Then for $k<n$, the heat forms $\omega_k(g,\xi)$ are zero, and $\omega_n(g,\xi)$ is given by a polynomial in the Pontryagin-forms of $g$ and the Chern-forms of $\xi$. In particular, $\mathrm{str}\left(\exp(-t(D^{\xi})^2)(x,x)\right)\mathrm{dvol}_g$ converges for $t\searrow 0$ with
	$$\lim_{t\searrow 0}\mathrm{str}\left(\exp(-t(D^{\xi})^2)(x,x)\right)\mathrm{dvol}_g = \omega_n(g,\xi)(x).$$
\end{thm}

\section{The local index theorem}\label{section:localindex}

Let $G_n\in \{\mathrm{SO}(n),\mathrm{Spin}(n)\}$ for $n=2l$ even and let $(\sigma,\varepsilon)$ be a chiral geometric symbol acting on a $G_{n}$-representation $V$. We want to identify the local index density $\omega_{n}(g,\xi)$ of the induced chiral geometric operator. We will follow the strategy pursued by Gilkey \cite{gilkeyInvarianceTheoryHeat1994} and Atiyah, Bott and Patodi \cite{atiyahHeatEquationIndex1973}.

Let $(M,P)$ be a $G_{n}$-manifold, $\xi=(E,h,\nabla^{\xi})$ be a hermitian vector bundle over $M$ with fiber dimension $m$ and let $D^{\xi}$ be the induced chiral geometric operator on $M$. Let $\mathrm{U}(E)$ be the orthonormal frame bundle of $E$, it is a principal $\mathrm{U}(m)$-bundle over $M$, such that
\begin{equation*}
  \mathrm{U}(E)\times_{\mathrm{U}(m)}\mathbb{C}^{m} \cong E.
\end{equation*} 
Let $\tilde{P}$ be the fiber product of $\pi_{1}\colon P \to M$ and $\pi_{2}\colon U(E) \to M$, i.e.
\begin{equation*}
  \tilde{P} = \{ (p,h)\in P\times \mathrm{U}(E) \mid \pi_{1}(p) = \pi_{2}(h) \}.
\end{equation*}
This is a principal $G_{n}\times \mathrm{U}(m)$-bundle over $M$, such that 
\begin{align*}
  \tilde{P}\times_{G_{n}\times \mathrm{U}(m)}\mathbb{R}^{n}&\cong TM,\\
  \tilde{P}\times_{G_{n}\times \mathrm{U}(m)} V\times \mathbb{C}^{m} &\cong VM\otimes E,
\end{align*}
where $\mathrm{U}(m)$ acts trivially on $\mathbb{R}^{n}$. In particular, $\tilde{P}$ is a $G_{n}\times U(m)$-structure on $M$ in the sense of Definition \ref{def:groupstructure}. Now observe that the twisted symbol
$$
  \sigma^{+}\otimes \mathrm{id}_{\mathbb{C}^{m}}\colon V^{+}\otimes \mathbb{C}^{m}\otimes \mathbb{R}^{n}\to V^{-}\otimes \mathbb{C}^{m}
$$
is $G_{n}\times \mathrm{U}(m)$-equivariant and hence gives a universal symbol class 
$$
\sigma^{+}\otimes \mathrm{id}_{\mathbb{C}^{m}}\in K_{G_{n}\times \mathrm{U}(m)}(\mathbb{R}^{n})
$$
in the equivariant $K$-theory of $\mathbb{R}^{n}$. This allows us to calculate the index of $D_{+}^{\xi}$ using the Atiyah--Singer index theorem for elliptic complexes associated to group structures \cite[Proposition 2.17]{atiyahIndexEllipticOperators1968}. Denote by $\chi(F)$ the Euler class of a real oriented vector bundle.

\begin{thm}[Atiyah--Singer]\label{thm:atiyahsinger}
	Let $(\sigma,\varepsilon)$ be a chiral $G_n$-geometric symbol for $n=2l$ even, acting on a $G_{n}$-representation $V$. Let $(M,P)$ be a $G_n$-manifold with induced orientation $\smallO$, and let $\xi$ be a hermitian vector bundle over $M$. Let 
  $D^{\xi}$ be the induced twisted geometric operator and $D^{\xi}_{+}$ be its positive chiral part. Then the characteristic class
  \begin{equation*}
    \frac{\mathrm{ch}(V_{+}M)-\mathrm{ch}(V_{-}M)}{\chi(TM_{\smallO}) }\in H^*(M,\mathbb{R})
  \end{equation*}
  is well-defined and
	\begin{equation}\label{eq:asindex}
		\mathrm{ind}(D_{+}^{\xi})= (-1)^l\left\langle\mathrm{ch}(E^{\xi})\cdot \frac{\mathrm{ch}(V_{+}M)-\mathrm{ch}(V_{-}M)}{\chi(TM_{\smallO})}\cdot \hat{A}(M)^2,[M_{\smallO}]\right\rangle,
	\end{equation}
	where $\left\langle \alpha, [M]_{\smallO}\right\rangle$ denotes the pairing of a cohomology class $\alpha$ with the fundamtenal class $[M]_{\smallO}$ induced by the orientation $\smallO$ of $M$.
\end{thm}

\begin{example}
	Recall the higher Dirac operators $D_{j}$ defined in Remark \ref{rmk:higher_Dirac_operator}. Bure\v{s} \cite{buresHigherSpinDirac1999} computed their index, yielding
	\begin{equation}\label{eq:indexhigherdirac}
		\ind D_{j,+} = \langle(\mathrm{ch}(\Lambda^j T^\ast_\mathbb{C}M)+ \mathrm{ch}(\Lambda^{j-1}T^\ast_\mathbb{C}M))\hat{\mathrm{A}}(M),[M]\rangle.
	\end{equation}
	In particular, for the Rarita--Schwinger operator $Q=D_{1}$ we obtain following formula for the index
	\begin{equation}\label{eq:indexrs}
		\ind Q_+ = \langle \hat{\mathrm{A}}(TM)(\mathrm{ch}(T_\mathbb{C}M)+1), [M]\rangle
	\end{equation}
	see also \cite{hommaKernelRaritaSchwinger2019, barManifoldsManyRarita2021}.
\end{example}

Since 
\begin{equation}\label{eq:characteristicclass}
  P\mapsto (-1)^{l}\cdot\frac{\mathrm{ch}(P\times_{G_{n}}V_{+})-\mathrm{ch}(P\times_{G_{n}} V_{-})}{\chi(P\times_{G_{n}}\mathbb{R}^{n})}\hat{A}(P\times_{G_{n}}\mathbb{R}^{n})^{2}
\end{equation}
is a natural transformation from principal $G_{n}$-bundles to real cohomology, it thus defines a real characteristic class of $G_{n}$. It is therefore represented by some $G_{n}$-invariant polynomial on the Lie-Algebra $\mathfrak{g}_{n}$. Since the adjoint representation of $\mathrm{Spin}(n)$ factors through $\mathrm{SO}(n)$, and the Lie algebras of $\mathrm{SO}(n)$ and $\mathrm{Spin}(n)$ are isomorphic (as $\mathrm{SO}(n)$-representations) the real characteristic classes of $G_{n}$ are exactly those of $\mathrm{SO}(n)$.
Therefore the characteristic class \ref{eq:characteristicclass} is represented by some polynomial $$f(p_{1},\dots,p_{l-1},\chi)$$ in the Pontryagin classes and the Euler class. For a $G_{n}$-manifold $(M,P)$ with induced orientation $\smallO$ we thus have
\begin{equation*}
  \frac{\mathrm{ch}(V_{+,\smallO}M)-\mathrm{ch}(V_{-,\smallO}M)}{\chi(M_{\smallO})}\hat{A}(M)^{2}= f(p_{1}(M),\dots,p_{l-1}(M),\chi(M_{\smallO})).
\end{equation*}
For the reversed orientation $\bar{\smallO}$ we have by Proposition \ref{prop:chirality}
\begin{align*}
  \mathrm{ch}(V_{+,\bar{\smallO}}M)&= \mathrm{ch}(V_{-,\smallO}M)\\
  \mathrm{ch}(V_{-,\bar{\smallO}}M)&= \mathrm{ch}(V_{+,\smallO}M)
\end{align*}
and by the properties of the Euler-class
\begin{equation*}
  \chi(M_{\bar{\smallO}}) = -\chi(M_{\smallO}),
\end{equation*}
the $\hat{A}$-polynomial is known to be given by a polynomial in the Pontryagin-classes and is therefore invariant under changes of orientation.
Thus, the characteristic class \ref{eq:characteristicclass} is invariant under changes of orientation, implying 
\begin{equation*}
  f(p_{1},\dots,p_{l-1},\chi) = f(p_{1},\dots,p_{l-1},-\chi).
\end{equation*}
Therefore, the variable $\chi$ appears only as a square in $f$, and since $\chi^{2}=p_{l}$,
\begin{equation*}
  f(p_{1},\dots,p_{l-1}, \chi)= \tilde{f}(p_{1},\dots,p_{l})
\end{equation*}
for some polynomial $\tilde{f}$. 

Let us return to the situation where $(M,P)$ is a $G_{n}$-manifold with induced metric $g$ and $\xi=(E,h,\nabla^{\xi})$ is a hermitian vector bundle over $M$. Denote by $\Omega^{g}\in \Omega^{2}(P,\mathfrak{g}_n)$ the curvature $2$-form of $g$ and let $\Omega^{\xi}\in \Omega^{2}(\mathrm{U}(E),\mathfrak{u}(m))$ be the curvature $2$-form of the connection $\nabla^{\xi}$. The Chern--Weil homomorphism asserts the equality of the cohomology classes
\begin{align*}
  p_{i}(M)&=  \left[\sigma_{2i}\left(\frac{\Omega^{g}}{2\pi}\right)\right] \in H^{4i}(M,\mathbb{R}),\\
  \mathrm{ch}(E) &= \sum_{k} \frac{1}{k!}\cdot \left[s_{k}\left(\frac{\Omega^{\xi}}{2\pi i}\right)\right] \in H^{2*}(M,\mathbb{R}),
\end{align*}
where $\sigma_{i}$ is the $i$th elementary symmetric polynomial and $s_{k}$ is the $k$th power sum symmetric polynomial. The square brackets denote the de Rham cohomology class of closed differential forms. Rewriting
\begin{align*}
  p_{i}(g) = \sigma_{2i}\left(\frac{\Omega^{g}}{2\pi}\right), \quad \mathrm{ch}_{k}(\xi) = \frac{1}{k!} s_{k}\left(\frac{\Omega^{\xi}}{2\pi i}\right), 
\end{align*}
it follows from the above discussion that the de Rham cohomology class of the mixed-degree differential form
\begin{equation}\label{eq:polynomialincharacteristicclasses}
  \tilde{f}(p_{1}(g),\dots,p_{l}(g))\sum_{k}\mathrm{ch}_k(\xi)
\end{equation}
is equal to
$$
  (-1)^l\cdot\mathrm{ch}(E^{\xi})\cdot \frac{\mathrm{ch}(V_{+}M)-\mathrm{ch}(V_{-}M)}{\chi(TM_{\smallO})}\cdot \hat{A}(M)^2.
$$
The $n$-form part of the differential form is given by
\begin{equation}\label{eq:difffrom1}
  \sum_{k} \mathrm{ch}_k(\xi)\cdot \sum_{\substack{I\\\text{partitions of }\\ \frac{1}{4}(n-2k)}}a_{k,I}\cdot p_{I}(g)
\end{equation}
where $p_{I}= p_{i_{1}}\wedge\dots \wedge p_{i_{j}}$ and $a_{k,I}\in \mathbb{R}$. By the Atiyah--Singer index theorem \ref{thm:atiyahsinger}, it has the property that for any $G_{n}$-manifold $(M,P)$ equipped with an hermitian vector bundle $\xi$ the index of the induced operator $D^{\xi}_{+}$ is given by
\begin{equation}\label{eq:indexcohomology}
  \operatorname{ind}D_{+}^{\xi} = \int_{M} \sum_{k} \mathrm{ch}_k(\xi)\cdot \sum_{\substack{I\\\text{partitions of }\\ \frac{1}{4}(n-2k)}}a_{k,I}\cdot p_{I}(g),
\end{equation}
where $g$ is the induced metric and we integrate with respect to the induced orientation on $M$. 

Let $\omega_{n}(g,\xi)$ be the local index density of $D^{\xi}$, where we twist with $m$-dimensional vector bundles. By Theorem \ref{thm:indexcoefficients} there exists a polynomial $q_{m}$ such that
\begin{equation*}
  \omega_{n}(g,\xi) = q_{m}(\mathrm{ch}_{1}(\xi),..,\mathrm{ch}_{l}(\xi), p_{1}(g),\dots, p_{l}(g)).
\end{equation*}
Since $D^{\xi \oplus \eta}= D^{\xi}\oplus D^{\eta}$ and thus
$$
\operatorname{str}\exp(-t(D^{\xi \oplus \eta})^{2})= \operatorname{str}\exp(-t(D^{\xi})^{2})+ \operatorname{str}\exp(-t(D^{\eta})^{2}),
$$
it follows that $\omega_{n}$ is additive under Whitney sums, i.e.
\begin{equation*}
  \omega_{n}(g,\xi \oplus \eta) = \omega_{n}(g,\xi)+ \omega_{n}(g,\eta). 
\end{equation*}
Rewrite 
\begin{equation*}
  \omega_{n} = q_{m,0}(p) +q_{m,1}(\mathrm{ch},p)+\dots+ q_{m,l}(\mathrm{ch},p), 
\end{equation*}
where $q_{m,i}$ is homogeneous of degree $i$ in the variables $(\mathrm{ch}_k)_{k\geq 1}$. The additivity of $\omega_{n}$ implies for $k\geq 0$ 
$$
  \omega_{n}(g,\xi) = \omega_{n}(g, \xi \oplus \underline{\mathbb{C}}^{k})-  \frac{k}{m+k}\omega_{n}(g, \underline{\mathbb{C}}^{m+k}),
$$
$\underline{\mathbb{C}}^{k}$ being the trivial $k$-dimensional bundle. From this, we easily obtain
\begin{align*}
  q_{m,0}(p)&= \frac{m}{m+k}q_{m+k,0}(p),\\
  q_{m,i}(\mathrm{ch},p)&= q_{m+k,i}(\mathrm{ch},p) \text{ for }i\geq 1.
\end{align*}
Therefore the polynomials $(q_{m,i})_{i\geq 1}$ are independent of the dimension $m$ of the twist bundle, and we can rewrite $\omega_{n}$
$$
  \omega_{n}(g,\xi)= \mathrm{ch}_{0}(\xi) q_{0}(p(g))+ \sum_{i\geq 1}q_{i}(\mathrm{ch}(\xi),p(g)).
$$
For any hermitian vector bundle $\xi$ and $N\in \mathbb{N}$ the additivity of $\omega_{n}$ and the Chern-character imply
\begin{align*}
  N\Big(\mathrm{ch}_{0}(\xi) q_{0}(p(g))+ \sum_{i\geq 1}q_{i}\left(\mathrm{ch}(\xi),p(g)\right)\Big) &= N\omega(g,\xi)\\
  &= \omega(g,N\xi)\\
  &= \mathrm{ch}_{0}(N\xi) q_{0}(p(g))+ \sum_{i\geq 1}q_{i}(\mathrm{ch}(N\xi),p(g))\\
  &=  N\mathrm{ch}_{0}(\xi) q_{0}(p(g))+ \sum_{i\geq 1}N^{i}q_{i}(\mathrm{ch}(\xi),p(g)).
\end{align*}
Thus $q_{i}=0$ for $i\geq 2$ and $\omega_{n}$ is linear in the Chern-character,
\begin{equation}\label{eq:diffform2}
  \omega_{n}(g,\xi) = \sum_{k=0}^{l}\mathrm{ch}_{k}(\xi)\cdot \sum_{\substack{I\\\text{partitions of }\\ \frac{1}{4}(n-2k)}}b_{k,I}\cdot p_{I}(g),
\end{equation}
for some $b_{k,I}\in \mathbb{R}$. By equations \ref{eq:indexdensity} and \ref{eq:indexcohomology}, for each $G_{n}$-manifold $(M,P)$ and each hermitian vector bundle $\xi$ over $M$ the equality
\begin{equation}\label{eq:equalityofforms}
  \sum_{k=0}^{l} \sum_{\substack{I\\\text{partitions of }\\ \frac{1}{4}(n-2k)}}a_{k,I} \int_{M} \mathrm{ch}_{k}(\xi) \cdot p_{I}(g) = \sum_{k=0}^{l} \sum_{\substack{I\\\text{partitions of }\\ \frac{1}{4}(n-2k)}}b_{k,I} \int_{M} \mathrm{ch}_{k}(\xi)\cdot p_{I}(g)
\end{equation}
holds. 

Consider the manifold $\mathbb{C}P^{1}$ and the complex line bundle $\xi=T\mathbb{C}P^{1}$ with the usual hermitian structures induced by the Fubini-Study metric. Then the Chern character of $\xi$ is given by
$$
  \mathrm{ch}(\xi)= 1+ c_{1}(\xi),
$$
with 
\begin{equation*}
  \int_{\mathbb{C}P^{1}} c_{1}(\xi) = -2.
\end{equation*}
Since as a real manifold $\mathbb{C}P^{1}\cong S^{2}$ is two-dimensional, the total Pontryagin class of $\mathbb{C}P^{1}$ is trivial. For $0\leq j \leq l$ consider manifolds of the form $(\mathbb{C}P^{1})^{j}\times M_{n-2j}$, where $M_{n-2j}$ is a $n-2j$-dimensional manifold, equipped with the hermitian line bundle 
$$
  \xi^{\otimes j} = \pi_{1}^{*}\xi \otimes \dots \otimes \pi_{j}^{*}\xi \to (\mathbb{C}P^{1})^{j}\times M_{n-2j}.
$$ 
and some product metric $g$.
Here, $\pi_{i}\colon (\mathbb{C}P^{1})^{j}\times M\to \mathbb{C}P^{1}$ is the projection onto the $i$th component. For the Chern character 
\begin{equation*}
  \operatorname{ch} \xi^{\otimes j}= \prod_{i=1}^{j}(1+c_{i}) = \sum_{k=1}^{j}\sigma_{k}(c_{1},\dots,c_{j})
\end{equation*}
holds, where $c_{i}= \pi^{*}_{i}c_{1}(\xi)$. For the total Pontryagin classes we have 
\begin{equation*}
  p((\mathbb{C}P^{1})^{j}\times M_{n-2j}) = p((\mathbb{C}P^{1})^{j})p(M_{n-2j})= p(M_{n-2j }).
\end{equation*}
For a partition $I$ of an integer $k$, denote the $I$th Pontryagin number of a $4k$-dimensional manifold $N$ by
\begin{equation*}
  p_{I}[N]:= \langle p_{I}(N),[N]\rangle = \int_{N} p_{i_{1}}\wedge\dots \wedge p_{i_{r}}.
\end{equation*}

Let $I$ be a partition of $\frac{1}{4}(n-2k)$ for $k<j$, then
\begin{align*}
  \int_{(\mathbb{C}P^{1})^{j}\times M_{n-2j}} \mathrm{ch}_k(\xi^{\otimes j})\wedge p_{I}(g) = {j \choose k}\cdot \left(\int_{\mathbb{C}P^{1}} c_{1}(\xi)\right)^{k} \cdot \underbrace{p_{I}[\mathbb{C}P^{j-k}\times M_{n-2j}]}_{=0}= 0.
\end{align*}
For a partition $I$ of $\frac{1}{4}(n-2j)$, we have 
\begin{align*}
  \int_{(\mathbb{C}P^{1})^{j}\times M_{n-2j}} \mathrm{ch}_j(\xi^{\otimes j})\wedge p_{I}(g) = (-2)^{j}p_{I}[M_{n-2j}]
\end{align*}
and for a partition $I$ of $\frac{1}{4}(n-2k)$ for $k>j$, the $2k$-form part $\mathrm{ch}_{k}$ of the Chern character of $\xi^{\otimes j}$ is zero, and thus 
\begin{align*}
  \int_{(\mathbb{C}P^{1})^{j}\times M_{n-2j}} \mathrm{ch}_k(\xi^{\otimes j})\wedge p_{I}(g) = 0.
\end{align*}
Applying equation \ref{eq:equalityofforms} to the manifold $(\mathbb{C}P^{1})^{j}\times M_{n-2j}$ yields
\begin{equation}\label{eq:pontryaginnumbers}
  \sum_{\substack{I\\\text{partitions of }\\ \frac{1}{4}(n-2j)}}a_{j,I}  \cdot p_{I}[M_{n-2j}] =  \sum_{\substack{I\\\text{partitions of }\\ \frac{1}{4}(n-2j)}}b_{j,I} \cdot p_{I}[M_{n-2j}].
\end{equation}
Note that $j\in \{ 0,\dots,l \}$ and $M_{n-2j}\in \mathbf{Man}_{n-2j}^{G_{n-2j}}$ were chosen arbitrarily. 
To deduce that $a_{j,I}$ and $b_{j,I}$ coincide we invoke the following Theorem of Thom.

\begin{thm}\cite[Theorem 16.8]{milnorCharacteristicClasses1974}
	Let $s_i$ be the power sum symmetric polynomial, and $M_i$ be a sequence of manifolds of dimension $4i$ with $s_i(p)[M_i]\neq 0$. Then the matrix 
	\[(p_I[M_{j_1}\times\dots\times M_{j_r}])_{I,J\text{ partitions of }k}\]
	is non-singular.
\end{thm}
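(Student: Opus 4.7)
The plan is to pass from the matrix $\bigl(p_I(M_J)\bigr)$ to the matrix $\bigl(s_I(M_J)\bigr)$ via an invertible change of basis, and then to show that the second matrix is triangular with non-vanishing diagonal in the refinement partial order on partitions of $n$. Here I write $M_J := M_{j_1}\times\cdots\times M_{j_r}$ for a partition $J = (j_1,\dots,j_r)$ of $n$. The key fact is that $\{p_I\}$ and $\{s_I\}$, both indexed by partitions of $n$, are two $\mathbb{Q}$-bases of the weight-$n$ symmetric polynomials, so there is an invertible matrix $C$ with $p_I = \sum_L C_{IL}\, s_L$, and consequently $(p_I(M_J))_{I,J} = C \cdot (s_L(M_J))_{L,J}$. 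It therefore suffices to prove the non-singularity of the matrix of $s$-numbers.

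The decisive ingredient is multiplicativity of the total Pontryagin class, $p(M\times N) = p(M)\,p(N)$, which says that the Pontryagin roots of $M_J$ form the disjoint union of the roots of the factors. Since $s_I$ coincides, up to a non-zero integer factor, with the monomial symmetric polynomial $m_I$, the standard coproduct formula for monomial symmetric functions gives
\[
s_I(M_J) \;\propto\; \sum_{I = I^{(1)}\sqcup\cdots\sqcup I^{(r)}}\prod_{l=1}^{r}\pi_l^{\ast}\, m_{I^{(l)}}(M_{j_l}),
\]
summed over ordered decompositions of the multiset $I$ into $r$ pieces, with $\pi_l\colon M_J \to M_{j_l}$ the projections. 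By Künneth, the pairing with $[M_J]$ vanishes unless each $m_{I^{(l)}}(M_{j_l})$ is top-degree on $M_{j_l}$, that is, unless $|I^{(l)}| = j_l$ for every $l$. Such a decomposition exists precisely when $I$ refines $J$ (the parts of $J$ arise as sums of disjoint subcollections of parts of $I$), so after fixing any total order that extends the refinement partial order, the matrix $\bigl(s_I(M_J)\bigr)$ is triangular.

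For the diagonal entry $I = J$, a short counting argument restricts the admissible decompositions sharply. Since $I$ and $J$ have the same number $r$ of parts and every $I^{(l)}$ must be non-empty (as $j_l \geq 1$), the equality $\sum_l \# I^{(l)} = r$ forces each $I^{(l)}$ to be a singleton of value exactly $i_l$. The admissible decompositions are thus the value-preserving bijections of parts of $I$ onto parts of $J$, of which there are $\prod_i n_i(I)!$; each contributes $\prod_l s_{i_l}(M_{i_l})[M_{i_l}]$, and the hypothesis $s_i(p)[M_i] \neq 0$ makes the diagonal entries non-zero. Combined with triangularity, this gives non-singularity of $(s_I(M_J))$, and then of $(p_I(M_J))$ by Step 1. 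The main technical obstacle I foresee is the coproduct step: one must carefully justify that the Künneth projection to top forms is compatible with the monomial-symmetric coproduct and cleanly identifies the non-vanishing condition as refinement. Once this is in place, the change of basis and the singleton-decomposition count are essentially bookkeeping.
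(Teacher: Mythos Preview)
The paper does not prove this theorem at all; it simply quotes it from Milnor--Stasheff \cite[Theorem~16.8]{MilStaff} and uses it as a black box. Your proposal is precisely the standard argument given there: reduce from the $p_I$-matrix to the $s_I$-matrix via the change of basis between elementary and monomial symmetric polynomials, use the Whitney product formula for $p$ together with the coproduct for monomial symmetric functions to get triangularity with respect to refinement, and read off nonvanishing of the diagonal from the hypothesis $s_i(p)[M_i]\neq 0$. So there is nothing to compare against in the present paper, and your approach matches the cited reference.

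One small correction that does not affect the conclusion: your count of admissible diagonal decompositions as $\prod_i n_i(I)!$ is off. In the monomial-symmetric coproduct
\[
m_I\bigl(x^{(1)}\cup\cdots\cup x^{(r)}\bigr)=\sum_{I^{(1)}\cup\cdots\cup I^{(r)}=I} m_{I^{(1)}}(x^{(1)})\cdots m_{I^{(r)}}(x^{(r)}),
\]
the sum runs over ordered $r$-tuples of \emph{partitions} (multisets), not over labelled parts of $I$; so for $I=J$ the constraint $I^{(l)}=(j_l)$ determines the tuple uniquely and the coefficient is $1$, not $\prod_i n_i(I)!$. (Check $m_{(2,2)}(x,y)=m_{(2,2)}(x)+m_{(2)}(x)m_{(2)}(y)+m_{(2,2)}(y)$.) Either way the diagonal entry equals $\prod_l s_{j_l}(p)[M_{j_l}]\neq 0$, so your triangularity argument goes through unchanged.
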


Let $M_{1}$ be the $K 3$-surface and $M_{j}= \mathbb{H}P^{j}$, $j\geq 2$ be the quaternionic projective space. These manifolds are all spinnable, hence carry $G_{n}$-structures, and have the property that $s_{i}(p)[M_{i}]\neq 0$ \cite[2.3]{atiyahSpinManifoldsGroupActions1970}. Applying equation \ref{eq:pontryaginnumbers} to the manifolds $M_{i_{1}}\times\dots \times M_{i_{r}}$ yields the equality of the coefficients $a_{j,I}$, $b_{j,I}$. 

Renaming the differential form \ref{eq:difffrom1} to
\begin{equation*}
  (-1)^{l} \left(\mathrm{ch}(\nabla^{\xi}) \cdot \frac{\mathrm{ch}(V_{+})-\mathrm{ch}(V_{-})}{\chi}(\nabla^{g})\cdot \hat{A}(\nabla^{g})^{2}\right)_{n},
\end{equation*}
we have just shown the following result.

\begin{thm}\label{thm:localindextheorem}
	Let $(\sigma,\varepsilon)$ be a chiral $G_n$-geometric symbol for $n=2l$ even. Let $(M,P)$ be a Riemannian $G_n$-manifold equipped with an hermitian vector bundle $\xi$. Let $D^{\xi}$ be the induced twisted geometric operator. Then the equality
	\begin{equation*}
		\lim_{t\searrow 0}\left(\operatorname{str}\exp(-t (D^{\xi})^{2})\cdot\mathrm{dvol}_g\right) = (-1)^{l} \left(\mathrm{ch}(\nabla^{\xi}) \cdot \frac{\mathrm{ch}(V_{+})-\mathrm{ch}(V_{-})}{\chi}(\nabla^{g})\cdot \hat{A}(\nabla^{g})^{2}\right)_{n}
	\end{equation*}
	holds.
\end{thm}

Applying Theorem \ref{thm:localindextheorem} to the Rarita--Schwinger, the higher Dirac operators and the higher signature operators and using Equations \ref{eq:indexrs} and \ref{eq:indexhigherdirac}, we obtain the corresponding local index theorems.
\begin{cor}\label{cor:RS}
	Let $Q^{\xi}$ be the Rarita--Schwinger operator on an even-dimensional Riemannian spin-manifold $(M,P)$, twisted with some hermitian vector bundle $\xi$. Then
	\begin{equation*}
		\lim_{t\searrow 0}\left(\operatorname{str}\exp(-t(Q^{\xi})^{2})\cdot\mathrm{dvol}_g\right) = \left(\mathrm{ch}(\nabla^{\xi})\left(\mathrm{ch}(T_\mathbb{C})(\nabla^{g})+1\right)\hat{A}(\nabla^{g})\right)_n.
	\end{equation*}
\end{cor}

\begin{cor}\label{cor:higherD}
	Let \(D_j^{\xi}\) be the higher Dirac operator on an even dimensional Riemannian spin-manifold \((M,P)\), twisted with some hermitian vector bundle $\xi$. Then
  \begin{align*}
    \lim_{t\searrow 0} \left(\operatorname{str}\exp(-t(D^{\xi}_{j})^{2})\mathrm{dvol}_g\right) = \left(\mathrm{ch}(\nabla^{\xi})\cdot\left(\mathrm{ch}(\Lambda^{j}T_{\mathbb{C}})+\mathrm{ch}(\Lambda^{j-1}T_{\mathbb{C}})\right)(\nabla^{g})\cdot \hat{A}(\nabla^{g})\right)_{n}.
  \end{align*}
\end{cor}

\begin{cor}\label{cor:higherP}
	Let $P_{\mu}^{\xi}$ be the higher signature operator on an oriented 4-dimensional Riemannian manifold $(M,g)$, twisted with some hermitian vector bundle $\xi$. Then
	$$
    \lim_{t\searrow 0} \left(\operatorname{str}\exp(-t(P^{\xi}_{\mu})^{2})\mathrm{dvol}_g\right) = \left( 1+\mu \right)\left(\mathrm{ch}(\nabla^{\xi})\cdot L(\nabla^{g})\right)_{n}.
	$$
\end{cor}

\backmatter

\bmhead{Acknowledgements}

I would like to thank my Ph.D. advisor Christian Bär for proposing this project as part of my Ph.D. thesis. I am especially indebted to Mehran Seyedhosseini for many helpful discussions.


\bibliography{bibliothek}

\end{document}